\newcommand{\bigcomp}{%
  \DOTSB
  \mathop{\vphantom{\sum}\mathpalette\bigcomp@\relax}%
  \slimits@
}
\newcommand{\bigcomp@}[2]{%
  \begingroup\m@th
  \sbox\z@{\(#1\sum\)}%
  \setlength{\unitlength}{0.9\dimexpr\ht\z@+\dp\z@}%
  \vcenter{\hbox{%
    \begin{picture}(1,1)
    \bigcomp@linethickness{#1}
    \put(0.5,0.5){\circle{1}}
    \end{picture}%
  }}%
  \endgroup
}
\newcommand{\bigcomp@linethickness}[1]{%
  \linethickness{%
      \ifx#1\displaystyle 2\fontdimen8\textfont\else
      \ifx#1\textstyle 1.65\fontdimen8\textfont\else
      \ifx#1\scriptstyle 1.65\fontdimen8\scriptfont\else
      1.65\fontdimen8\scriptscriptfont\fi\fi\fi 3
  }%
}
\newtheorem{theorem}{Theorem}[section]
\renewenvironment{proof}[1][\proofname]{\par
  \pushQED{\qed}%
  \normalfont \topsep6\p@\@plus6\p@\relax
  \list{}{\leftmargin=3em
          \rightmargin=\leftmargin
          \settowidth{\itemindent}{\itshape#1}%
          \labelwidth=\itemindent
          \parsep=0pt \listparindent=\parindent 
  }
  \item[\hskip\labelsep
        \itshape
    #1\@addpunct{.}]\ignorespaces
}{%
  \popQED\endlist\@endpefalse
}
\newtheorem{lemma}[theorem]{Lemma}
\theoremstyle{definition}
\newtheorem{prop}{Proposition}[section]
\newtheorem{cor}{cor}
\theoremstyle{definition}
\newenvironment{subproof}[1][\proofname]{%
  \begin{proof}[#1]%
}{%
  \end{proof}%
}
\newenvironment{aside}
  {\begin{mdframed}[style=0,%
      leftline=false,rightline=false,leftmargin=2em,rightmargin=2em,%
          innerleftmargin=0pt,innerrightmargin=0pt,linewidth=0.75pt,%
      skipabove=7pt,skipbelow=7pt]\small}
  {\end{mdframed}}
\newtheorem{definition}{definition}[section]
\title{Foundations of Quantum Contextual Topos: Integrating Modality and Topos Theory in Quantum Logic}
\author{Jesse Werbow}
\date{August 2024}
\begin{document}

\maketitle

\section*{Abstract}
This paper introduces the Quantum Contextual Topos (QCT), a novel framework that extends traditional quantum logic by embedding contextual elements within a topos-theoretic structure. This framework seeks to provide a classically-obedient tool for exploring the logical foundations of quantum mechanics. The QCT framework aims to address the limitations of classical quantum logic, particularly its challenges in capturing the dynamic and contextual nature of quantum phenomena. By integrating modal operators and classical propositional logic within a topos structure, the QCT offers a unified approach to modeling quantum systems. The main result of this work is demonstrating that the internal logic of QCT corresponds to a form of classical propositional polymodal logic. We do this by generalizing Stone's Representation Theorem for a specific case of polymodal algebras and their underlying Stone Spaces.

\tableofcontents

\section{Introduction}

The intersection of mathematical logic and quantum mechanics, particularly in the context of quantum gravity, has garnered increasing attention in recent years \citep{causets, majid2006algebraic, massimo2022, griffiths1994quantum, rickles2016quantum, catianeo1993}. This renewed interest stems from the realization that traditional physical models, which are deeply rooted in classical concepts such as determinism and continuity, may not sufficiently capture the complexities of quantum phenomena. Quantum mechanics introduces several challenges, such as probabilistic outcomes and non-commutative observables. These challenges suggest that alternative logical frameworks may be crucial for advancing our understanding of quantum theory. Consequently, there is a growing recognition that such frameworks could play a vital role in the development of quantum gravity.

In classical physics, logical structures such as Boolean algebra provide a robust foundation for reasoning about physical systems. However, quantum mechanics defies these classical logical structures in several fundamental ways. For instance, the superposition principle and the inherent uncertainty in quantum measurements cannot be easily reconciled with classical logic's distributive properties. The necessity of a new logical framework is particularly pronounced in the field of quantum gravity, where attempts to unify general relativity and quantum mechanics have repeatedly encountered conceptual and technical obstacles.

The search for alternative logical structures is not merely theoretical but has practical implications in quantum gravity. A notable example is \textit{causal set theory} (causets), which suggests that spacetime is fundamentally discrete rather than continuous. This approach aligns with the broader need to develop quantum gravity theories that diverge from traditional, continuous models. In this theory, the fabric of spacetime is composed of elementary "causal sets," with the relationships between these sets governed by a logical structure that is distinct from the continuous geometric spaces of classical general relativity \citep{causets}. This shift from a continuum to a discrete model not only challenges traditional ontological assumptions but also necessitates a reevaluation of the logical frameworks that underpin our physical theories.

The implications of adopting a logic-centered perspective in quantum gravity are profound, extending beyond mere technical modifications. Such approaches challenge the foundational assumptions of classical physics, prompting a reconsideration of the very nature of reality as described by quantum theory. The movement towards more abstract and formal methodologies reflects a broader trend within theoretical physics, where the integration of quantum mechanics with mathematical logic is increasingly seen as a critical avenue for resolving long-standing issues.

This paper contributes to this ongoing discourse by exploring the potential of a novel logical framework, which we term the "Quantum Contextual Topos" (QCT). The QCT framework seeks to address the limitations of traditional quantum logic by embedding quantum logical propositions within a topos-theoretic structure, thereby offering a more comprehensive treatment of quantum phenomena. By integrating elements of modal logic within this topos structure, the QCT aims to bridge the conceptual gap between quantum mechanics and classical logic, providing a unified approach to modeling quantum systems.

\subsection{Motivation and Background}

Traditional quantum logic, as formalized by Von Neumann and others, represents quantum measurement outcomes using projection operators within a propositional framework \citep{nine}. These operators project quantum states onto eigenspaces corresponding to specific measurement outcomes. However, the algebraic structure of these propositions, typically organized as an orthomodular lattice, deviates significantly from classical logic, particularly in its handling of logical operations such as conjunction and disjunction \citep{great}. The non-distributive nature of this lattice structure presents substantial challenges in describing quantum phenomena within the confines of classical logic.

One of the central issues with traditional quantum logic is its focus on static properties, which limits its ability to capture the dynamic aspects of quantum mechanics, such as basis transformations \citep{prat}. Quantum logic departs from the classical logic’s distributive law, which states:
\[
A\land(B\lor C)=(A\land B)\lor(A\land C).
\]
where \(\land\) denotes conjunction and \(\lor\) denotes disjunction. This departure is especially evident in the non-commutativity of quantum measurements, which complicates classical reasoning about logical operations. The uncertainty principle, for instance, demonstrates how the conjunction of propositions related to momentum and position cannot be distributed in a manner consistent with classical logic. The precision limitations inherent in quantum measurements result in logical operations that defy classical intuition.

Moreover, traditional Neumannian quantum logic struggles with scenarios where different measurement bases are conflated, leading to counter-intuitive results. Consider, for example, a particle moving along a one-dimensional line, where the reduced Planck constant \(\hbar=1\). Let us define the following propositions:
\begin{itemize}
    \item \(P\): "The particle has momentum in the interval \([-\frac{1}{8}, +\frac{1}{8}]\)."
    \item \(Q\): "The particle has position in the interval \([-2,0]\)."
    \item \(R\): "The particle has position in the interval \([0,+2]\)."
\end{itemize}
According to the uncertainty principle (\(\Delta x\Delta p\geq\frac{1}{2}\)), the expression \(P\land(Q\lor R)\) holds true, suggesting that the particle's momentum and position can simultaneously fall within the specified intervals without violating quantum mechanical constraints. However, the distributed version \((P\land Q)\lor(P\land R)\) is false, as it implies a level of precision in both momentum and position that contradicts the uncertainty principle. Specifically, the uncertainty product in this scenario is \(\frac{1}{4}\), which is less than the minimum value of \(\frac{1}{2}\) required by quantum mechanics. This example underscores the limitations of traditional quantum logic in reconciling different measurement bases and highlights the need for a more nuanced logical framework.

\subsection{Modal Logic}

Modal logic is an extension of classical propositional logic that introduces the concept of modality, allowing statements to express more than just truth or falsity. In modal logic, propositions can be qualified by operators that capture notions such as necessity (denoted by \(\square\)) and possibility (denoted by \(\Diamond\)). For example, if \(p\) is a proposition, \(\square p\) means "it is necessarily true that \(p\)," while \(\Diamond p\) means "it is possibly true that \(p\)" \citep{blackburn_rijke_venema:modal_logic}.

The semantics of modal logic are often framed using Kripke models, which consist of a set of possible worlds, a relation between these worlds (called an accessibility relation), and an assignment of truth values to each proposition in each world. Formally, a Kripke frame is a pair \((W,R)\), where \(W\) is a non-empty set of possible worlds, and \(R\subseteq W\times W\) is the accessibility relation that connects these worlds. The accessibility relation defines which worlds are "accessible" from any given world, thereby determining the truth of modal statements based on their truth in accessible worlds. For instance, \(\square p\) is true in a world \(w\) if \(p\) is true in all worlds accessible from \(w\); similarly, \(\Diamond p\) is true in \(w\) if \(p\) is true in at least one world accessible from \(w\).

In classical settings, these structures allow for reasoning about how truth values change or hold across different contexts or scenarios. However, when applied to quantum mechanics, modal logic may offer a framework that aligns with the inherent contextuality and non-commutative nature of quantum observables. The use of Kripke frames and accessibility relations becomes particularly valuable in modeling the relationships between different quantum states and the logical operations that apply within those states.

\subsection{Topos-Theoretic Approach to Quantum Physics}

To address the limitations of traditional quantum logic, we propose the introduction of a topos structure, which we term the "Quantum Contextual Topos" (QCT). The QCT framework extends the concept of a frame in modal logic to accommodate the peculiarities of quantum logic, offering a more comprehensive treatment of quantum phenomena. By integrating multiple modal operators within this framework, the QCT enriches the logical structure, allowing for a more detailed exploration of quantum mechanics.

In categorical theory, a topos serves as a versatile framework, akin to a "universe" where objects and morphisms interact under a specific set of logical rules \citep{johnstone2002topos}. These rules, known as the \textit{internal logic} of the topos, can be tailored to reflect the unique principles of the mathematical structures they organize. Within the QCT framework, we define a "Quantum Frame" as a type of general frame in modal logic, serving as the foundational structure of the QCT. Our analysis reveals that the internal logic of the QCT aligns with propositional polymodal logic, making it a powerful tool for exploring the logical foundations of quantum mechanics.

The concept of general frames extends modal semantics beyond traditional Kripke frames \citep{bellis1993}. As a form of topos, general frames provide a sophisticated mathematical structure that is well-suited for delving into the logical underpinnings of quantum mechanics \citep{maclane1994sheaves}. By constructing a topos for quantum mechanics, we tailor the internal logic to reflect the principles of quantum theory, capturing the subtleties of phenomena such as superposition and entanglement \citep{isham1997topos, heunen2009bohrification, Landsman2017}. Quantum frames, as a variant of polymodal general frames, are the cornerstone of the QCT, enabling the formalization of contextual propositions on an orthomodular lattice with indexed modal operators.

\subsubsection*{Comparison to Bohrification}

The internal logic of the QCT is intentionally classical in design, closely resembling the polymodal logic that we seek to articulate. This classical framework is crucial for maintaining consistency with quantum mechanical principles while providing a structured approach to investigating quantum propositions. When compared to Bohrification, another topos-theoretic approach to quantum mechanics, QCT offers both similarities and distinctions. Bohrification emphasizes the contextual nature of quantum observables, drawing on Niels Bohr's philosophical views on quantum theory. Both QCT and Bohrification use topos theory to provide a mathematical framework for quantum mechanics, but they differ in their treatment of internal logic. While Bohrification typically employs an intuitionistic internal logic to reflect the non-classical nature of quantum logic, QCT endeavors to retain a classical internal logic within the topos, offering an alternative interpretation of quantum propositions \citep{heunen2009bohrification, nlab:bohr_topos}.

\section{Preliminaries}

\subsection*{Terminology}
\begin{definition}[Orthomodular Lattice]
\textbf{}\\An \textit{orthomodular lattice} is a structure \(L=(L,\wedge,\vee,\neg,0,1)\) consisting of a set \(L\) with two binary operations \(\wedge\) (meet) and \(\vee\) (join), a unary operation \(\neg\) (complement), and two distinguished elements \(0\) (the bottom element) and \(1\) (the top element), satisfying the following axioms:
\begin{enumerate}
    \item \textbf{Lattice Axioms:} \((L,\wedge,\vee)\) is a lattice, meaning that for all \(x,y,z\in L\):
    \begin{itemize}
        \item \(x\wedge y=y\wedge x\) \hfill (Commutativity of meet)
        \item \(x\vee y=y\vee x\) \hfill (Commutativity of join)
        \item \(x\wedge(y\wedge z)=(x\wedge y)\wedge z\) \hfill (Associativity of meet)
        \item \(x\vee(y\vee z)=(x\vee y)\vee z\) \hfill (Associativity of join)
        \item \(x\wedge(x\vee y)=x\) \hfill (Absorption law for meet)
        \item \(x\vee(x\wedge y)=x\) \hfill (Absorption law for join)
    \end{itemize}
    \item \textbf{Orthocomplementation:} For every element \(x\in L\), there exists a unique complement \(\neg x\in L\) such that:
    \begin{itemize}
        \item \(x\wedge\neg x=0\)
        \item \(x\vee\neg x=1\)
        \item \(\neg(\neg x)=x\)
    \end{itemize}
    \item \textbf{Orthomodularity:} For all \(x,y\in L\), if \(x\leq y\), then \(y=x\vee(y\wedge\neg x)\), where \(\leq\) denotes the partial order defined by \(x\leq y\) if and only if \(x\wedge y=x\).
\end{enumerate}
\end{definition}

\begin{definition}[Polymodal Logic]
\textbf{}\\\textit{Polymodal Logic} is an extension of modal logic that involves multiple modal operators, each of which may have different interpretations. These modal operators can represent various kinds of modalities such as temporal, epistemic, or deontic modalities. Formally, a polymodal logic with \(n\) modal operators is characterized by the set of formulas containing operators \(\square_1,\square_2,\dots,\square_n\), each corresponding to a different type of accessibility relation in the Kripke semantics. The structure of polymodal logic is detailed in \citep{goldblatt:logics_of_time_and_computation}.
\end{definition}

\begin{definition}[General Frames]
\textbf{}\\\textit{General Frames} extend Kripke frames by allowing for more general kinds of structures that can interpret modal logics. A general frame is defined as a tuple \((W, \mathcal{F}, \mathcal{R})\), where:
\begin{itemize}
    \item \(W\) is a non-empty set of worlds.
    \item \(\mathcal{F}\) is a field of subsets of \(W\) (i.e., a Boolean algebra of subsets of \(W\)).
    \item \(\mathcal{R}\subseteq\mathcal{F}\times\mathcal{F}\) is a relation between subsets of \(W\), which generalizes the accessibility relation of Kripke frames.
\end{itemize}
General frames are used to study completeness and other properties of modal logics, particularly when standard Kripke semantics may not be sufficient. Detailed discussions on general frames can be found in \citep{blackburn_rijke_venema:modal_logic}.
\end{definition}

\section{Maximal Boolean Sublattices}

In quantum mechanics, the mathematical structure of a Hilbert space \(H\) is often analyzed through the powerset \(\mathcal{P}(H)\) of its closed subspaces, which forms an orthomodular lattice \cite{birkhoff1936logic}. This orthomodularity reflects the non-classical logic that governs quantum systems \cite{piron1976foundations, Ptak2000}. However, within this non-classical framework, substructures exist that adhere to classical logic. These substructures, forming Boolean sublattices, correspond to classical snapshots within the quantum system. In these contexts, a set of mutually commuting observables defines a Boolean algebra, allowing us to view the quantum system from a classical perspective \cite{kochen1967problem, heunen2009bohrification}. These Boolean sublattices do not partition the entire orthomodular lattice; instead, they represent classical perspectives within the quantum framework.

\subsection{Maximal Chains and Boolean Sublattices}

We first introduce the notion of chains in a lattice:

\begin{definition}
\textbf{}\\A \textbf{maximal chain} in a lattice \(L\) is a totally ordered subset \(C\subset L\) such that for any two elements \(a,b\in C\), either \(a\leq b\) or \(b\leq a\). A chain is \textbf{maximal} if it includes both the least element (denoted by \(0\)) and the greatest element (denoted by \(1\)) of \(L\), and cannot be extended by adding any other elements from \(L\) without violating the total order.
\end{definition}

In the context of an orthomodular lattice, a maximal chain can be constructed by starting with the least element, iteratively adding orthogonal elements that preserve the chain's total order, and finally including the greatest element \cite{Ptak2000}.

\begin{definition}
\textbf{}\\A Boolean sublattice \(B\) of an orthomodular lattice \(L\) is called \textbf{maximal} if there is no other Boolean sublattice \(B'\subset L\) such that \(B\subset B'\). In simpler terms, \(B\) is maximal if it cannot be extended to a larger Boolean sublattice within \(L\) without losing its Boolean structure.
\end{definition}

Maximal Boolean sublattices represent the largest possible substructures that adhere to classical logic within the otherwise non-classical orthomodular framework of quantum logic. These sublattices correspond to sets of mutually commuting observables, which behave like classical variables, thus allowing us to view quantum systems from a classical perspective within the confines of quantum theory.

\subsection{Construction of a Maximal Boolean Sublattice}

To construct a maximal Boolean sublattice for an orthomodular lattice \(L\), the following procedure is typically used:

\begin{enumerate}
    \item \textbf{Start with a Maximal Chain \(C\) in \(L\)}: Begin with a maximal chain to ensure that the structure spans the lattice.
    
    \item \textbf{Define the Centralizer of \(C\)}: The centralizer, denoted \(\text{Cent}_L(C)\), is the set of all elements in \(L\) that commute with every element in \(C\). Formally,
    \[
    \text{Cent}_L(C)=\{x\in L\mid\forall a\in C,\,(x\wedge a)\vee(x^\perp\wedge a)=a\}.
    \]
    This step ensures that elements that commute with the entire chain \(C\) preserve the order and logical structure of \(C\).

    \item \textbf{Construct the Sublattice \(L_C\)}: Using the chain \(C\) and its centralizer \(\text{Cent}_L(C)\), construct a sublattice \(L_C\) by taking meets (greatest lower bounds) and joins (least upper bounds) of elements from \(C\) and \(\text{Cent}_L(C)\). Formally,
    \[
    L_C=\left\{x\in L\mid x=\bigvee_{i=1}^n\bigwedge_{j=1}^{m_i} y_{ij},\,y_{ij}\in C\cup\text{Cent}_L(C)\right\}.
    \]
    
    \item \textbf{Define the Maximal Boolean Sublattice \(B_C^{\text{max}}\)}: Finally, define the maximal Boolean sublattice \(B_C^{\text{max}}\) as the set of all elements in \(L_C\) and their complements within \(L\):
    \[
    B_C^{\text{max}}=\left\{x\in L\mid x=\bigvee_{i=1}^n\bigwedge_{j=1}^{m_i}z_{ij},\,z_{ij}\in L_C\cup\{y^\perp\mid y\in L_C\}\right\}.
    \]
\end{enumerate}

\begin{prop}
\(B_C^{\text{max}}\) is a maximal Boolean sublattice of \(L\).
\end{prop}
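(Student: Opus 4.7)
The plan is to verify the three conditions in turn: that $B_C^{\text{max}}$ is a sublattice of $L$, that it is Boolean, and finally that it is maximal among Boolean sublattices. The guiding idea is that the construction starts with a chain $C$ whose elements mutually commute and augments it with elements that commute with all of $C$; this pairwise commutativity is what unlocks the classical (Boolean) behavior inside an otherwise orthomodular lattice, via the Foulis--Holland theorem.

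First I would check closure. By definition, $B_C^{\text{max}}$ consists of joins of meets of elements of $L_C \cup \{y^\perp \mid y \in L_C\}$, so closure under join is immediate. Closure under meet uses the distributive identity $\bigl(\bigvee_i a_i\bigr) \wedge \bigl(\bigvee_j b_j\bigr) = \bigvee_{i,j}(a_i \wedge b_j)$, which holds in an orthomodular lattice when all the $a_i$ and $b_j$ commute pairwise. Closure under orthocomplement follows from De Morgan's laws together with $\neg \bigwedge_j z_{ij} = \bigvee_j z_{ij}^\perp$, and each $z_{ij}^\perp$ is already of the required form. The Boolean identities (distributivity, complementation, De Morgan) then follow from the same mutual commutativity, once Foulis--Holland is invoked on the generating family $C \cup \text{Cent}_L(C)$.

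For maximality, I would argue by contradiction. Suppose $B'$ is a Boolean sublattice of $L$ with $B_C^{\text{max}} \subsetneq B'$, and pick $x \in B' \setminus B_C^{\text{max}}$. Because $B'$ is Boolean, $x$ commutes with every element of $B'$, in particular with every element of $C \subseteq B_C^{\text{max}} \subseteq B'$. Hence $x \in \text{Cent}_L(C)$, so $x$ appears (trivially, as a singleton join of a singleton meet) in $L_C$ and therefore in $B_C^{\text{max}}$, a contradiction. This establishes the maximality statement once the Boolean sublattice part is in hand.

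The hard part, and the step where the proof could unravel if not handled carefully, is the justification that $L_C$ consists of pairwise commuting elements. The centralizer $\text{Cent}_L(C)$ gathers elements that commute with every member of $C$, but two elements of the centralizer need not commute with one another in an arbitrary orthomodular lattice. So either the construction must implicitly restrict to a maximal pairwise-commuting subset of $\text{Cent}_L(C)$, or one must show that maximality of the chain $C$ forces $\text{Cent}_L(C)$ to be pairwise commuting. I expect the argument to hinge on the orthomodular law and the fact that a maximal chain is already a Boolean sublattice in its own right, so that the Foulis--Holland machinery can be bootstrapped onto $C \cup \text{Cent}_L(C)$. Getting this commutativity lemma right is the crux; the rest is bookkeeping.
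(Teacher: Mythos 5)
Your proposal follows the same route as the paper's proof: closure of \(B_C^{\text{max}}\) under the lattice operations by construction, distributivity from the mutual commutativity of the generators, and maximality by showing that any element of a strictly larger Boolean sublattice already commutes with all of \(C\) and hence lands in \(\text{Cent}_L(C)\subseteq L_C\subseteq B_C^{\text{max}}\). Your maximality argument is in fact a more rigorous rendering of the paper's: the paper asserts that ``any element in \(B'\) that commutes with \(C\) must already be included in \(B_C^{\text{max}}\)'' without explaining why an arbitrary \(y\in B'\) commutes with \(C\); you supply the missing reason (any two elements of a Boolean sublattice of an orthomodular lattice commute, since \((y\wedge a)\vee(y^\perp\wedge a)=(y\vee y^\perp)\wedge a=a\) by distributivity) and the correct conclusion (\(y\in\text{Cent}_L(C)\), which sits inside \(L_C\) as a one-term join of a one-term meet).

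The crux you flag --- that two elements of \(\text{Cent}_L(C)\) need not commute with \emph{each other} --- is a genuine gap, and it is present in the paper's proof as well: the paper simply ``notes'' that \(B_C^{\text{max}}\) is composed of mutually commuting elements, which is precisely the assertion requiring an argument. In a general orthomodular lattice the commutant of a set is a sub-orthomodular lattice but not a Boolean one, so the claim must rest on the maximality of the chain \(C\); neither you nor the paper proves this, and without it the Foulis--Holland step (and hence the distributivity of \(B_C^{\text{max}}\)) does not go through. In short: same approach as the paper, with your write-up more honest about where the real work lies, but the commutativity lemma still has to be established (or the construction amended to take a maximal pairwise-commuting subset of \(\text{Cent}_L(C)\), as you suggest) before either version is complete.
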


\begin{proof}
We will prove that \(B_C^{\text{max}}\) is a Boolean sublattice of \(L\) and that it is maximal, meaning there is no Boolean sublattice \(B'\) of \(L\) such that \(B_C^{\text{max}} \subset B'\).

\textbf{1. \(B_C^{\text{max}}\) is a Boolean sublattice:} We must show that \(B_C^{\text{max}}\) is closed under the operations of meet (\(\wedge\)), join (\(\vee\)), and complementation (\(x \mapsto x^\perp\)). Additionally, we must verify that \(B_C^{\text{max}}\) is distributive, a key property of Boolean algebras.

\begin{itemize}
    \item \textbf{Closure under Meet and Join:} By construction, every element \(x \in B_C^{\text{max}}\) can be expressed as a combination of elements from \(C\) and \(\text{Cent}_L(C)\). Since \(L_C\) is closed under meet and join, the operations within \(B_C^{\text{max}}\) will also be closed.
    
    \item \textbf{Closure under Complementation:} The complement of any element \(x\in B_C^{\text{max}}\) belongs to \(B_C^{\text{max}}\) because \(L_C\) is closed under complementation.
    
    \item \textbf{Distributivity:} To prove distributivity, we note that \(B_C^{\text{max}}\) is composed of elements that mutually commute. For any elements \(x,y,z\in B_C^{\text{max}}\), the distributive property requires that:
\[
x\wedge(y\vee z)=(x\wedge y)\vee(x\wedge z).
\]
Since all operations within \(B_C^{\text{max}}\) involve commuting elements from \(L_C\), the above equation holds, ensuring that \(B_C^{\text{max}}\) retains the Boolean structure, which inherently satisfies distributivity.

\end{itemize}

\textbf{2. \(B_C^{\text{max}}\) is Maximal:} Assume for contradiction that there exists a Boolean sublattice \(B' \subset L\) such that \(B_C^{\text{max}} \subset B'\) and \(B_C^{\text{max}} \neq B'\). This implies the existence of an element \(y \in B'\) such that \(y \notin B_C^{\text{max}}\). However, by the construction of \(B_C^{\text{max}}\), any element in \(B'\) that commutes with \(C\) must already be included in \(B_C^{\text{max}}\), leading to a contradiction. Thus, \(B_C^{\text{max}}\) is maximal.
\end{proof}

\subsubsection*{Connection to Quantum Mechanics: Basis/Sublattice Correspondence}
It is well-known that the orthomodular lattice of closed subspaces of a Hilbert space has a direct correspondence with the concept of bases in quantum mechanics \citep{piron1976foundations}. We verify this for the maximal Boolean sublattices as just constructed to further illustrate the connection between the abstract lattice-theoretic regime and more familiar Quantum mechanics.

\begin{theorem}[Basis/Sublattice Correspondence Theorem]
There is a one-to-one correspondence between the set of orthonormal bases of a finite-dimensional Hilbert space \(\mathcal{H}\) and the set of maximal Boolean sublattices in the orthomodular lattice \(\mathcal{L}\) of closed subspaces of \(\mathcal{H}\).
\end{theorem}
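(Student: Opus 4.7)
The plan is to construct explicit maps in both directions and verify they are mutually inverse. Given an orthonormal basis $\mathcal{B}=\{e_1,\dots,e_n\}$ of $\mathcal{H}$, I would define
\[
\Phi(\mathcal{B})=\{\operatorname{span}(S):S\subseteq\mathcal{B}\}\subseteq\mathcal{L}.
\]
Conversely, given a maximal Boolean sublattice $B\subseteq\mathcal{L}$, I would let $\Psi(B)$ collect the atoms of $B$, each of which I will show to be a one-dimensional subspace, and then pick a unit vector in each atom. The vector is unique only up to phase, which is the standard ambiguity in the notion of ``orthonormal basis''; I will comment on this in the write-up so that the bijection is really between maximal Boolean sublattices and orthonormal bases viewed as unordered sets of rays.

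To show $\Phi(\mathcal{B})$ is a maximal Boolean sublattice, I would first note that it is closed under intersection, span, and orthogonal complement, and is isomorphic to the powerset algebra $2^{\{1,\dots,n\}}$. For maximality, I would exhibit $\Phi(\mathcal{B})$ as the $B_C^{\max}$ of the maximal chain $C=\{\operatorname{span}(e_1,\dots,e_k):0\le k\le n\}$ and appeal to the preceding proposition. A direct alternative is to observe that any $V\in\mathcal{L}$ that commutes with every $\operatorname{span}(e_i)$ must be a direct sum of some subcollection of these lines, hence already lies in $\Phi(\mathcal{B})$.

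For the reverse direction, the key lemma is that every atom of a maximal Boolean sublattice $B$ of $\mathcal{L}$ is one-dimensional. Since $\dim\mathcal{H}<\infty$, the sublattice $B$ is finite and its atoms $A_1,\dots,A_m$ give an orthogonal direct-sum decomposition $\mathcal{H}=A_1\oplus\cdots\oplus A_m$, with every element of $B$ a direct sum of some subfamily of the $A_i$. Suppose for contradiction that $\dim A_1\ge 2$ and pick any one-dimensional subspace $L\subset A_1$. Then $L$ commutes with every $V\in B$, because each such $V$ either contains $A_1$ (hence $L$) or is orthogonal to $A_1$ (hence to $L$); therefore $B\cup\{L,A_1\ominus L\}$ generates a strictly larger Boolean sublattice, contradicting maximality. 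Choosing a unit vector in each $A_i$ then yields an orthonormal basis.

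The main obstacle will be this atom-dimensionality lemma, since it is the only step that genuinely uses maximality rather than just the Boolean-algebra structure; everything else is bookkeeping. Once it is in hand, $\Psi\circ\Phi=\mathrm{id}$ follows from the observation that the atoms of $\Phi(\mathcal{B})$ are exactly the rays $\operatorname{span}(e_i)$, and $\Phi\circ\Psi=\mathrm{id}$ follows because the elements of $B$ are precisely the direct sums of its atoms.
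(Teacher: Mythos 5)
Your proposal follows the same overall route as the paper's proof: an orthonormal basis is sent to the powerset-like lattice of spans of its subsets, and a maximal Boolean sublattice is sent back to unit vectors spanning its atoms. The substantive difference is that you isolate and actually prove the one step that genuinely uses maximality --- that every atom of a maximal Boolean sublattice of \(\mathcal{L}\) is one-dimensional --- via the commutation argument: a line \(L\) inside a higher-dimensional atom \(A_1\) commutes with every element of \(B\) (each such element either contains \(A_1\) or is orthogonal to it), so adjoining \(L\) and \(A_1\ominus L\) would generate a strictly larger Boolean sublattice. The paper's reverse direction simply asserts that each projection in \(B\) has one-dimensional range, so your lemma fills a real gap. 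You also correctly flag that a unit vector in a ray is determined only up to phase (and the basis only up to ordering), so the bijection is properly stated between maximal Boolean sublattices and unordered sets of mutually orthogonal rays; the paper glosses over this. Two small points to nail down in the write-up: first, justify that \(B\) is finite and atomic with pairwise orthogonal atoms joining to \(\mathcal{H}\) (distinct atoms \(a,b\) of a Boolean subalgebra satisfy \(a\wedge b=0\), hence \(a\le b^{\perp}\), so they are orthogonal, and finiteness then follows from \(\dim\mathcal{H}<\infty\)); second, verify that the sublattice generated by \(B\cup\{L,A_1\ominus L\}\) is still Boolean, which follows from the Foulis--Holland theorem because all of the generators pairwise commute.
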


\begin{proof}
\textbf{}
   \begin{itemize}
       \item \(\mathbf{\Rightarrow}\): Let \(\{v_1,v_2,\ldots,v_n\}\) be an orthonormal basis of the Hilbert space \(\mathcal{H}\). We will show that this orthonormal basis generates a unique maximal Boolean sublattice within the orthomodular lattice \(\mathcal{L}\) of closed subspaces of \(\mathcal{H}\).

   \begin{enumerate}
       \item \textbf{Define the One-Dimensional Subspaces:} For each basis vector \(v_i\), define the one-dimensional subspace \(V_i=\mathrm{span}\{v_i\}\). Each subspace \(V_i\) is closed and corresponds to an element in the orthomodular lattice \(\mathcal{L}\).

       \item \textbf{Projection Operators:} The orthogonal projection operator onto \(V_i\) is denoted by \(P_i\). These projection operators have the following properties:
       \begin{itemize}
           \item \textbf{Orthogonality:} Since \(\{v_i\}\) is an orthonormal basis, the projection operators \(P_i\) satisfy \(P_iP_j=0\) for \(i\neq j\), meaning they project onto orthogonal subspaces.
           \item \textbf{Completeness:} The sum of these projection operators is the identity operator \(I=\sum_{i=1}^{n} P_i\), ensuring that the entire space \(\mathcal{H}\) is spanned by the subspaces \(V_i\).
       \end{itemize}

       \item \textbf{Generating the Maximal Boolean Sublattice:} The set of these projection operators \(\{P_1,P_2,\ldots,P_n\}\) forms a maximal chain in \(\mathcal{L}\). The sublattice \(B\) generated by this maximal chain consists of all possible finite joins (suprema) and meets (infima) of the projection operators \(P_i\) and their complements \(P_i^c=I-P_i\).

       \item \textbf{Isomorphism to a Boolean Algebra:}
           \begin{itemize}
               \item Since each projection operator \(P_i\) corresponds to a distinct one-dimensional subspace, and since these subspaces are mutually orthogonal and collectively span \(\mathcal{H}\), the structure generated by the set \(\{P_1, P_2, \ldots, P_n\}\) is isomorphic to the power set of the index set \(\{1, 2, \dots, n\}\), which is a Boolean algebra.
               \item \textbf{Maximality:} This Boolean algebra is maximal because adding any additional element to this structure would either violate the orthogonality of the subspaces or introduce an element that is not a join or meet of the projection operators and their complements. Therefore, the sublattice \(B\) is a maximal Boolean sublattice of \(\mathcal{L}\).
           \end{itemize}
       \end{enumerate}
       Thus, from any orthonormal basis \(\{v_1, v_2, \ldots, v_n\}\) of \(\mathcal{H}\), we can uniquely construct a maximal Boolean sublattice in the orthomodular lattice \(\mathcal{L}\).
       \item \(\mathbf{\Leftarrow}\): Now, we show the converse: starting from a maximal Boolean sublattice \(B\) in \(\mathcal{L}\), we will construct a unique orthonormal basis for \(\mathcal{H}\).

   \begin{enumerate}
       \item \textbf{Maximal Boolean Sublattice Properties:}
       \begin{itemize}
           \item Since \(B\) is a Boolean algebra and maximal, it contains a maximal chain of projection operators \(\{P_1,P_2,\ldots,P_n\}\) that satisfy the orthogonality condition \(P_iP_j=0\) for \(i\neq j\), and their sum is the identity operator \(I=\sum_{i=1}^{n} P_i\).
           \item The lattice \(B\) being Boolean implies that for every projection operator \(P_i\in B\), its complement \(P_i^c=I-P_i\) is also in \(B\), ensuring the lattice structure is closed under complementation.
       \end{itemize}

       \item \textbf{Correspondence to One-Dimensional Subspaces:} Each projection operator \(P_i\) in \(B\) corresponds to a one-dimensional subspace \(V_i=\mathrm{range}(P_i)\) of \(\mathcal{H}\). The range of \(P_i\) is spanned by a vector \(v_i\) in \(\mathcal{H}\), i.e., \(V_i=\mathrm{span}\{v_i\}\).

       \item \textbf{Orthonormality of Vectors:}
       \begin{itemize}
           \item The vectors \(v_i\) corresponding to each \(V_i\) are mutually orthogonal because the projection operators \(P_i\) are orthogonal (\(P_iP_j=0\) for \(i\neq j\)).
           \item \textbf{Normalization:} By normalizing these vectors (i.e., setting \(u_i=\frac{v_i}{\|v_i\|}\), where \(\|v_i\|\) is the norm of \(v_i\)), we obtain an orthonormal set \(\{u_1,u_2,\ldots,u_n\}\).
       \end{itemize}

       \item \textbf{Spanning the Hilbert Space:} The orthonormal set \(\{u_1, u_2, \ldots, u_n\}\) spans the entire Hilbert space \(\mathcal{H}\), as the sum of the projection operators is the identity operator. Hence, this set forms an orthonormal basis for \(\mathcal{H}\).
   \end{enumerate}
   Therefore, every maximal Boolean sublattice \(B\) in \(\mathcal{L}\) corresponds uniquely to an orthonormal basis of \(\mathcal{H}\).
   \end{itemize}
   By establishing both directions, we have shown that there is a one-to-one correspondence between the set of orthonormal bases of \(\mathcal{H}\) and the set of maximal Boolean sublattices in \(\mathcal{L}\). This completes the proof of the Basis/Sublattice Correspondence Theorem.
\end{proof}

\subsubsection*{Physical Interpretation} This theorem provides a deep connection between the abstract mathematical structures used in quantum logic and the concrete physical concept of a quantum state basis. In quantum mechanics, the choice of an orthonormal basis corresponds to choosing a set of commuting observables (represented by the projection operators) that can be simultaneously measured. The maximal Boolean sublattice represents the algebra of propositions about the system that can be definitively true or false given this choice of basis.

By establishing this correspondence, we gain insight into how classical logic (as represented by the Boolean sublattice) emerges within the quantum framework when restricting attention to a particular measurement context (the chosen orthonormal basis).

Building on the concepts of maximal Boolean sublattices and their connection to quantum mechanics, we now turn our attention to the broader mathematical structure that encompasses these sublattices. Specifically, we will explore lattice endomorphisms, which provide a framework for understanding the transformation of sublattices within an orthomodular lattice. 
\section{Lattice Endomorphisms}

In this section, we explore the relationship between unitary operators on Hilbert spaces and their corresponding unitary automorphisms within orthomodular lattices. This correspondence is crucial for understanding how different classical contexts, represented by maximal Boolean sublattices, are related through transformations that preserve logical consistency.

\subsection{Unitary Automorphisms}

\begin{definition}[Unitary Automorphism]
\textbf{}\\Let \(L\) be the Hilbert lattice of a finite-dimensional Hilbert space \(H\), and let \(B\) be the set of all maximal Boolean sublattices of \(L\). A unitary automorphism \(\Phi:L\rightarrow L\) is defined for sublattices \(B, B' \in B\) with associated sets of atoms \(A,A'\) by the function:
    \[
    \Phi_{B'}(x\in A)=\bigvee_{a'_i\in A'}(x\land a'_i).
    \]
    The automorphism \(\Phi\) must satisfy the following properties:
    \begin{itemize}
        \item \textbf{Linearity}: For any element \(x \in B\) that can be expressed as a join of atoms \(x=\bigvee_{i=1}^{k}a_i\), \(\Phi\) satisfies:
        \[
        \Phi(x)=\bigvee_{i=1}^{k}\Phi(a_i).
        \]
        \item \textbf{Preservation of Orthogonality}: For any orthogonal elements \(a, b\in B\), the images \(\Phi(a)\) and \(\Phi(b)\) are orthogonal in \(B'\):
        \[
        \Phi(a)\land\Phi(b)=0.
        \]
        \item \textbf{Preservation of Complements}: For any element \(x\in B\), the complement of \(x\) is preserved under \(\Phi\):
        \[
        \Phi(x^\perp)=(\Phi(x))^\perp.
        \]
        \item \textbf{Preservation of Order}: For any elements \(x,y\in L\) with \(x\leq y\), the automorphism \(\Phi\) preserves this order:
        \[
        \Phi(x)\leq\Phi(y).
        \]
    \end{itemize}
\end{definition}

These properties ensure that \(\Phi\) maintains the structure of the lattice, mirroring the behavior of unitary operators on a Hilbert space.

\subsection{Group Structure of Unitary Automorphisms}

\begin{prop}
    The set of unitary automorphisms \(\text{UA}(L)\) forms a group under composition.
\end{prop}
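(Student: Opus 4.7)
The plan is to verify the four group axioms — closure, associativity, identity, and inverses — for $(\text{UA}(L), \circ)$ against the four defining properties of a unitary automorphism (linearity, preservation of orthogonality, preservation of complements, and preservation of order). Associativity comes for free, since function composition on any set is associative, so the real content lies in closure and the existence of two-sided inverses.

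For closure, I would take $\Phi_1, \Phi_2 \in \text{UA}(L)$ and check each of the four structural properties for $\Phi_1 \circ \Phi_2$. Linearity is immediate: if $x = \bigvee_{i} a_i$ is a join of atoms, then $\Phi_2$ sends it to $\bigvee_i \Phi_2(a_i)$ by its linearity, and applying $\Phi_1$ (also linear) yields $\bigvee_i (\Phi_1 \circ \Phi_2)(a_i)$. Preservation of orthogonality chains in the same way: if $a \wedge b = 0$, then $\Phi_2(a) \wedge \Phi_2(b) = 0$, and applying $\Phi_1$ to a pair of orthogonal elements again yields orthogonal images. The complement and order properties propagate identically. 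The identity map $\text{id}_L$ plainly satisfies all four conditions and is the unit for composition.

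The main obstacle, and the step that deserves the most care, is the existence of inverses. The definition of $\Phi$ in the previous subsection is given via a ``reindexing'' formula $\Phi_{B'}(x) = \bigvee_{a'_i \in A'}(x \wedge a'_i)$ on atoms of maximal Boolean sublattices, and the listed properties — particularly linearity combined with preservation of orthogonality and complements — strongly suggest that $\Phi$ is a bijection on atoms of each $B$ onto atoms of $B'$. I would exploit this: using the Basis/Sublattice Correspondence Theorem, atoms of a maximal Boolean sublattice correspond to the orthonormal basis vectors, and $\Phi$ implements a bijective change of basis. Injectivity on atoms follows from preservation of orthogonality (distinct atoms have orthogonal images, hence distinct images), and surjectivity onto the atoms of $B'$ follows from the completeness relation $\bigvee_i \Phi(a_i) = \Phi(1) = 1$ together with the fact that any nonzero image decomposes uniquely into atoms of $B'$. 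Linearity then promotes the atom-level bijection to a bijection of the whole lattice.

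Once bijectivity is in hand, I would define $\Phi^{-1}$ pointwise and verify that it inherits linearity, orthogonality preservation, complement preservation, and order preservation from $\Phi$ — each of these follows by applying $\Phi^{-1}$ to both sides of the corresponding equation for $\Phi$, using the fact that $\Phi$ is a structure-preserving bijection. In particular, if $\Phi(x) \leq \Phi(y)$, then applying orthomodularity in the image and pulling back gives $x \leq y$, so the order condition is preserved in both directions. This verifies $\Phi^{-1} \in \text{UA}(L)$ and completes the group axioms.
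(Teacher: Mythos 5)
Your proposal follows essentially the same route as the paper: verify closure, associativity, identity, and inverses directly against the four defining properties of a unitary automorphism, with associativity inherited from function composition. In fact your treatment of inverses is more careful than the paper's own, which simply asserts the bijectivity of \(\Phi_{B'}\) on atoms, whereas you justify injectivity via preservation of orthogonality and surjectivity via the completeness relation \(\bigvee_i \Phi(a_i) = 1\) before transporting the structure-preserving properties to \(\Phi^{-1}\).
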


\begin{proof}
    To show that \(\text{UA}(L)\) forms a group under composition, we need to verify the group axioms: closure, associativity, identity, and inverses.
    \begin{itemize}
        \item \textbf{Closure}: Given two unitary automorphisms \(\Phi_{B'}\) and \(\Phi_{B''}\), their composition \(\Phi_{B''}\circ\Phi_{B'}\) is defined as:
        \[
        (\Phi_{B''}\circ\Phi_{B'})(x)=\Phi_{B''}(\Phi_{B'}(x)).
        \]
        Since each automorphism distributes over the join operation, the composition is also a unitary automorphism, satisfying closure.
        
        \item \textbf{Associativity}: The associativity of the composition follows directly from the associativity of function composition.
        
        \item \textbf{Identity}: The identity automorphism \(\Phi_{\text{id}}\) is defined as:
        \[
        \Phi_{\text{id}}(x)=x\quad\text{for all }x\in A.
        \]
        This automorphism trivially satisfies all the required properties, serving as the identity element.
        
        \item \textbf{Inverses}: For a unitary automorphism \(\Phi_{B'}\), the inverse \(\Phi_{B'}^{-1}\) must satisfy:
        \[
        \Phi_{B'}^{-1}(\Phi_{B'}(x))=x\quad\text{for all }x\in A.
        \]
        The existence of such an inverse follows from the bijective nature of \(\Phi_{B'}\), ensuring it can map atoms of \(B'\) back to atoms of \(B\).
    \end{itemize}
    Therefore, \(\text{UA}(L)\) forms a group under composition.
\end{proof}

\subsection{Isomorphism between \(\text{UA}(L)\) and \(U(H)\)}

\begin{theorem}
    The groups \(\text{UA}(L)\) and \(U(H)\) are isomorphic, where \(L\) is a Hilbert lattice and \(H\) is the corresponding Hilbert space.
\end{theorem}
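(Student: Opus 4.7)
The plan is to construct an explicit isomorphism \(\Psi:U(H)\to\text{UA}(L)\) and verify that it is a bijective group homomorphism. The natural candidate sends \(U\in U(H)\) to the lattice map \(\Phi_U\) defined on closed subspaces by \(\Phi_U(V)=U(V)\). First I would verify that \(\Phi_U\in\text{UA}(L)\): since \(U\) is bounded linear and invertible, \(U(V)\) is again a closed subspace; order is preserved because \(V\subseteq W\) implies \(U(V)\subseteq U(W)\); orthogonality and complements are preserved because \(U\) preserves the inner product, so \(U(V^\perp)=U(V)^\perp\); and linearity on joins of atoms follows from \(U(\mathrm{span}\{v_1,\dots,v_k\})=\mathrm{span}\{Uv_1,\dots,Uv_k\}\). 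The homomorphism property \(\Phi_{UV}=\Phi_U\circ\Phi_V\) and \(\Phi_I=\Phi_{\mathrm{id}}\) follow directly from the definition of function composition.

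For surjectivity, given \(\Phi\in\text{UA}(L)\), I would fix a reference maximal Boolean sublattice \(B\), which by the Basis/Sublattice Correspondence Theorem corresponds to an orthonormal basis \(\{v_1,\dots,v_n\}\) of \(H\). The image \(\Phi(B)\) is again a maximal Boolean sublattice (since \(\Phi\) preserves meets, joins, and complements), and hence corresponds to another orthonormal basis \(\{w_1,\dots,w_n\}\), where each \(w_i\) spans \(\Phi(\mathrm{span}\{v_i\})\). Defining \(U\) by \(Uv_i=w_i\) and extending linearly yields a unitary operator, and I would check that \(\Phi_U\) agrees with \(\Phi\) first on atoms of \(B\), then on all of \(L_C\), then on all of \(B_C^{\max}\), and finally on every maximal Boolean sublattice, using the fact that atoms generate the sublattice and that every closed subspace lies in some maximal Boolean sublattice.

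Injectivity requires showing that if \(\Phi_U\) is the identity automorphism, then \(U\) is the identity in \(U(H)\). From \(U(V)=V\) for every one-dimensional subspace, one deduces that each unit vector \(v\) satisfies \(Uv=\lambda_v v\) with \(|\lambda_v|=1\); applying the same argument to \(v+w\) for linearly independent \(v,w\) forces \(\lambda_v=\lambda_w\), so \(U=\lambda I\) for some phase \(\lambda\).

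This last step is where I expect the main obstacle: strictly speaking, the kernel of \(\Psi\) contains all scalars \(\lambda I\) with \(|\lambda|=1\), which is the classical Wigner-type phase ambiguity that usually forces one to identify \(\mathrm{Aut}(L)\) with the \emph{projective} unitary group \(PU(H)=U(H)/U(1)\) rather than \(U(H)\) itself. To justify the theorem as stated, I would either need to interpret \(U(H)\) modulo global phase, or argue that the paper's definition of a unitary automorphism (being indexed by ordered pairs \((B,B')\) of sublattices together with bijections between their atom sets rather than just the underlying lattice map) carries enough rigidity to pin down the phase. Making this reconciliation precise — and in particular extracting a well-defined unitary from the combinatorial data of an atom-to-atom bijection between bases — is the technically delicate part of the argument.
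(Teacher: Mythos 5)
Your approach is the same as the paper's: the paper's entire proof consists of defining \(f(U)=\Phi_U\) and then asserting, without argument, that this map preserves composition and is injective and surjective. The detailed verifications you sketch (closedness of \(U(V)\), preservation of order, orthogonality, complements, and the homomorphism property) are exactly what would be needed to make that assertion into a proof, and your surjectivity argument via the Basis/Sublattice Correspondence Theorem is a reasonable way to supply the missing half.

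The obstacle you flag in your final paragraph is genuine, and the paper does not resolve it. Since \(\lambda I\) with \(|\lambda|=1\) maps every closed subspace to itself, the kernel of \(U\mapsto\Phi_U\) contains the full phase group \(U(1)\), so the map cannot be injective into any group of lattice maps; the correct codomain for an isomorphism is \(PU(H)=U(H)/U(1)\) (and, for \(\dim H\geq 3\), Wigner--Uhlhorn further says the full ortho-automorphism group is the \emph{projective} unitary--antiunitary group, so surjectivity onto all lattice automorphisms would also need qualification). Your hope that the paper's definition of a unitary automorphism carries enough extra data to pin down the phase does not pan out: \(\Phi_{B'}\) is defined purely by the lattice expression \(\bigvee_{a'_i\in A'}(x\land a'_i)\) on atoms, which is blind to phases, so two unitaries differing by a global phase induce literally the same \(\Phi\). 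The theorem as stated is therefore false without passing to the quotient, and your proposal is more honest than the paper's proof in making this visible rather than asserting injectivity.
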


\begin{proof}
    We define a mapping \(f:U(H)\rightarrow\text{UA}(L)\) as follows:
    \[
    f(U)=\Phi_U,
    \]
    where \(\Phi_U\) is the unitary automorphism induced by the unitary operator \(U\in U(H)\). This mapping preserves composition and satisfies both injectivity and surjectivity, establishing a group isomorphism between \(\text{UA}(L)\) and \(U(H)\).
\end{proof}

\section{Quantum Contextual Topos}

In this section, we define the Quantum Contextual Topos (QCT) and its associated topology. The QCT framework integrates topological and logical structures to model the contextual dependencies of quantum propositions. The construction of the QCT builds on the concept of a Quantum Frame, which extends traditional frames in modal logic to accommodate the unique requirements of quantum systems. By grounding these frames within a Stone space topology, we ensure that the logical model aligns with classical logic in suitable contexts while faithfully representing the non-classical features of quantum mechanics.

The QCT framework involves the development of key concepts such as basic open sets, covering families, and contextual topologies. These elements allow us to map logical propositions to specific topological structures, establishing a topology that mirrors the logical structure of quantum propositions. Central to this construction is the notion of contextuality, which is captured by the Quantum Frame and its associated topological features. Each context within an orthomodular lattice corresponds to a classical perspective, encapsulating a set of propositions that are consistent within a particular Boolean sublattice. This structure reflects the dynamic and non-commutative nature of quantum observables.

By developing the QCT, we customize such a topos-theoretic "universe" where the interplay between quantum logic and topology can be systematically explored. The internal logic of the QCT is constructed to align with the logical structure of the Quantum Frame, providing a unified approach to contextual quantum logic. Our framework harmonizes the logical and topological aspects of quantum reasoning, offering new insights into the underlying structures of quantum mechanics.

This section is organized as follows: We begin by defining the key concepts related to contexts and propositions within an orthomodular lattice, which are essential for establishing the underlying topology and logical framework. We then introduce the Quantum Frame, detailing its role in the overall structure of the QCT. Finally, we discuss the topological aspects of the QCT, including the construction of basic open sets and clopen sets, before concluding with a discussion on the dual algebra and its relevance to the QCT.

\begin{definition}[Logical Consequences]
\textbf{}\\Given a context \(c=(B,P)\) in \(\mathcal{C}(L)\), the logical consequences of \(c\), denoted \(\text{Th}(c)\), are the propositions \(q \in B\) entailed by \(P\). Formally,
\[
\text{Th}(c)=\{q \in B \mid P \models q\},
\]
where \(\models\) denotes the entailment relation in the Boolean algebra \(B\). This set reflects the deterministic nature of truth in classical logic.
\end{definition}

\begin{definition}[Compatibility and Filters]
\textbf{}\\A proposition \(q\) in \(B\) is compatible with a set of propositions \(P\) if \(q\wedge p\neq0_B\) for every \(p\in P\), where \(0_B\) is the bottom element of \(B\). The filter of compatible propositions \(F_c\) for a context \(c=(B,P)\) is the set of all propositions in \(B\) that are compatible with \(P\):
\[
F_c=\{q\in B\mid q\text{ is compatible with }P\}.
\]
This filter represents the maximal set of propositions within \(B\) that can be consistently true with \(P\), reflecting the classical logic embedded within the quantum context.
\end{definition}

\begin{definition}[Basis Extension]
\textbf{}\\Given a context \(c=(B,P)\) in \(\mathcal{C}(L)\), a basis extension to another basis \(B'\) identifies contexts in \(B'\) that are logically consistent with \(c\) via a unitary automorphism. This is formalized by the function
\[
E:\mathcal{C}(L)\times\mathcal{B}(L)\to\mathcal{P}(\mathcal{C}(L)),
\]
where \(\mathcal{B}(L)\) denotes the set of all bases in \(L\). For a given context \(c=(B,P)\) and a basis \(B'\), \(E(c,B')\) yields the set of all contexts \(c'=(B',P')\) such that:
\begin{itemize}
    \item There exists a unitary automorphism \(\Phi_{B\to B'} \in \text{UA}(L)\) that maps propositions from \(B\) to \(B'\).
    \item \(\text{Th}(c')\subseteq\text{Th}(c)\) under the automorphism \(\Phi\).
\end{itemize}
\end{definition}

\begin{definition}[Filter of Compatible Contexts]
\textbf{}\\The filter of compatible contexts \(F^\text{max}_c\) for a context \(c=(B,P)\) in \(\mathcal{C}(L)\) is the collection of all filters \(F_{c'}\) corresponding to contexts \(c'\) within the set \(E(c,B')\) for any basis \(B'\in\mathcal{B}(L)\):
\[
F^\text{max}_c=\{F_{c'}\mid c'\in E(c,B')\text{ for some }B'\in\mathcal{B}(L)\}.
\]
This filter extends compatibility to encompass all possible contexts that are logically consistent with \(c\).
\end{definition}

\subsection{Context Topology}
\begin{definition}[Basic Open Set]\label{bos}
\textbf{}\\For a context \(c=(B,P)\) in \(\mathcal{C}(\mathcal{L})\), the \textbf{basic open set} \(U_c\) is defined as:
\[
U_c=\{c'=(B',P')\in\mathcal{C}(\mathcal{L})\mid B'=B\text{ and }Th(c')\subseteq Th(c)\},
\]
where \(Th(c)\) is the set of all propositions in \(B\) that are logically entailed by the propositions in \(P\). The basic open set \(U_c\) represents the set of all contexts within the same Boolean sublattice \(B\) that share a consistent logical structure with \(c\).
\end{definition}

\begin{lemma}\label{une}
The set \(U_c\) is non-empty.
\end{lemma}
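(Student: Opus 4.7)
The plan is to prove non-emptiness by the most direct route available: exhibit an explicit element of $U_c$. The natural candidate is the context $c$ itself, and verifying membership should reduce to two reflexivity checks against the defining conditions of a basic open set.

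First, I would unpack Definition \ref{bos}: an element of $U_c$ is any context $c' = (B', P')$ with $B' = B$ and $\text{Th}(c') \subseteq \text{Th}(c)$. Setting $c' := c = (B, P)$, the first condition $B' = B$ is literally satisfied, and the second condition $\text{Th}(c) \subseteq \text{Th}(c)$ holds by the reflexivity of set inclusion. Hence $c \in U_c$, so $U_c \neq \emptyset$.

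The only subtlety worth a sentence is to confirm that $c$ is a \emph{bona fide} member of $\mathcal{C}(\mathcal{L})$ in the first place, which is immediate from the hypothesis that $c \in \mathcal{C}(\mathcal{L})$ was given at the start of Definition \ref{bos}. There is no real obstacle here; the lemma is essentially a sanity check that the notation $U_c$ is not vacuous, and its purpose is presumably to license later arguments (e.g.\ treating $\{U_c\}_{c \in \mathcal{C}(\mathcal{L})}$ as a genuine basis for a topology on $\mathcal{C}(\mathcal{L})$). I would therefore keep the proof to a single short paragraph.
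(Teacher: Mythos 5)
Your proof is correct and matches the paper's argument exactly: both exhibit \(c\) itself as an element of \(U_c\) by noting that \(B = B\) trivially and \(\text{Th}(c) \subseteq \text{Th}(c)\) by reflexivity of set inclusion. No further comment is needed.
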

\begin{proof}
Consider the context \(c=(B,P)\) itself. By definition, \(Th(c)\) includes all propositions in \(B\) that are logically entailed by \(P\). Since \(c\) shares the same basis \(B\) with itself and \(Th(c)\subseteq Th(c)\) (as a set is always a subset of itself), \(c\) is an element of \(U_c\). Moreover, any context \(c'=(B, P')\) where \(P'\) generates a theory \(Th(c')\) that is a subset of \(Th(c)\) is included in \(U_c\). This is due to the fact that \(Th(c')\subseteq Th(c)\) implies all propositions in \(P'\) are consistent with and do not exceed the logical implications of \(P\). Therefore, \(U_c\) is non-empty as it always contains at least the context \(c\) itself.
\end{proof}

\begin{definition}[Immediately Accessible Contexts]\label{iac}
\textbf{}\\For a context \(c=(B,P)\) in \(\mathcal{C}(\mathcal{L})\), the \textbf{immediately accessible set} \(A_c\) is defined as the set of all contexts \(c'=(B',P')\) such that:
\begin{itemize}
    \item If \(B'=B\), then \(Th(c')\subseteq F_c\), meaning that the logical consequences of \(c'\) are compatible with those of \(c\).
    \item If \(B'\neq B\), then \(c'\in F^{max}_c\), meaning the contexts in \(B'\) that are logically consistent with \(c\) under basis extension are accessible, corresponding to potential "truths" in a different classical perspective.
\end{itemize}
The set \(A_c\) captures the notion of immediate logical reachability within and across bases, which is fundamental for understanding the dynamical aspect of logical structures in quantum contexts.
\end{definition}

\begin{definition}[Basis-Dependent Accessibility Chain]\label{accchain}
\textbf{}\\The \textbf{basis-dependent accessibility chain} \(\mathcal{A}^B_c\) for a context \(c=(B,P)\) is a sequence of sets \((\mathcal{A}_0^B, \mathcal{A}_1^B, \mathcal{A}_2^B, \ldots)\), where each \(\mathcal{A}_n^B\) is defined iteratively as follows:
\begin{enumerate}
    \item \(\mathcal{A}_0^B=\{U_c\};\)
    \item For \(n\geq 1\),
    \[
    \mathcal{A}_n^B=\mathcal{A}_{n-1}^B\cup\bigcup_{c'\in S_{n-1}^B}\{U_{c''}\mid c''=(B'',P'')\in\mathcal{C}(\mathcal{L}),B''=B,U_{c''}\in A_{c'}\},
    \]
    where \(A_{c'}\) is the immediately accessible set (as defined in Definition \ref{iac}) associated with \(c'\), and \(S_{n-1}^B\) is the set of all contexts \(c'\) such that \(U_{c'} \in \mathcal{A}_{n-1}^B\).
\end{enumerate}
Each set \(\mathcal{A}_n^B\) in the sequence expands the previous set \(\mathcal{A}_{n-1}^B\) by including additional basic open sets corresponding to contexts within the same basis \(B\). The construction of this chain is pivotal for understanding how logical propositions can be traced through various accessible contexts, ultimately supporting the topological structure that will underlie the Quantum Contextual Topos.
\end{definition}

\begin{definition}[Stone Space]
\textbf{}\\
A \textbf{Stone space} is a topological space that is:
\begin{enumerate}
    \item \textbf{Compact:} Every open cover has a finite subcover.
    \item \textbf{Hausdorff:} Any two distinct points have disjoint neighborhoods.
    \item \textbf{Zero-dimensional:} The space has a base of clopen sets (sets that are both open and closed).
\end{enumerate}
\end{definition}

Stone spaces are the topological spaces that arise from the duality between Boolean algebras and topological spaces (as captured by Stone's Representation Theorem). This theorem states that every Boolean algebra is isomorphic to the algebra of clopen sets of some Stone space, and conversely, the clopen sets of any Stone space form a Boolean algebra. \citep{stonereptheorem}.

\begin{cor}
\((\mathcal{C}(\mathcal{L}),\mathcal{J})\) is compact.
\end{cor}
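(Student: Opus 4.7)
My plan is to deduce compactness directly from Stone's Representation Theorem, which has just been recalled and which guarantees that every Stone space is compact by definition. The strategy is to exhibit $(\mathcal{C}(\mathcal{L}), \mathcal{J})$ as (a homeomorphic copy of) the Stone dual of a suitable Boolean algebra.

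First, I would invoke the Basis/Sublattice Correspondence Theorem from Section 3 to identify each maximal Boolean sublattice $B$ of $\mathcal{L}$ with a Boolean algebra; Stone's theorem then yields a compact Stone space $S(B)$ whose points are the ultrafilters of $B$ and whose basic clopen sets are the collections of ultrafilters containing a fixed proposition. Second, for each fixed basis $B$, I would set up a bijection between contexts $c=(B,P)\in\mathcal{C}(\mathcal{L})$ and filters of $B$ via $c\mapsto\text{Th}(c)$, and check that this bijection sends the basic open set $U_c$ of Definition \ref{bos} (contexts with $\text{Th}(c')\subseteq\text{Th}(c)$) to a basic clopen set of $S(B)$. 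This identifies the basis-slice topology with the Stone topology on $S(B)$, so each slice is compact.

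Third, I would globalize by using the immediately accessible sets $A_c$ of Definition \ref{iac}, the basis-dependent accessibility chain $\mathcal{A}^B_c$ of Definition \ref{accchain}, and the unitary automorphisms in $\text{UA}(L)$ from Section 4 to understand how the slices are welded together in $\mathcal{J}$. Concretely, I would take an arbitrary open cover $\{V_\alpha\}$ of $(\mathcal{C}(\mathcal{L}),\mathcal{J})$, restrict it to each slice to extract a finite subcover there by Stone compactness, and then use the cross-basis identifications furnished by $\text{UA}(L)$ and basis extension $E$ to argue that only finitely many slices carry independent topological content relative to the cover, yielding a global finite subcover.

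The main obstacle is this final globalization step. The basic open sets $U_c$ of Definition \ref{bos} are intra-basis by construction, so naively $(\mathcal{C}(\mathcal{L}),\mathcal{J})$ looks like a disjoint union of Stone spaces indexed by $\mathcal{B}(\mathcal{L})$, which is uncountable when $\dim H\geq 2$ and hence noncompact. Compactness must rely essentially on the inter-basis structure encoded through the accessibility chain and the $\text{UA}(L)$-action, which collapse or identify slices so that only a compact quotient remains. Making these identifications explicit—so that the clopen sets generated by $\{U_c\}$ form a single Boolean algebra whose Stone dual is homeomorphic to $(\mathcal{C}(\mathcal{L}),\mathcal{J})$—will be the real work; once that is in hand, the corollary is immediate from the preceding remarks on Stone's theorem.
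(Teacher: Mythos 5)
There is a genuine gap: your write-up is a plan rather than a proof, and the step you yourself flag as ``the real work''---welding the basis-slices together so that only a compact quotient remains---is exactly the step that is never carried out. Nothing in Definitions \ref{bos}, \ref{iac}, or \ref{accchain} is shown to produce the cross-basis identifications you need, and without them your own observation stands: a disjoint union of infinitely many nonempty clopen slices is not compact, so the argument as outlined would prove the opposite of the corollary in the case you worry about. The paper's proof avoids this entirely by leaning on a hypothesis you have overlooked: \(\mathcal{L}\) is taken to be a \emph{finite} orthomodular lattice, so there are only finitely many maximal Boolean sublattices \(B_1,\dots,B_n\), the slice decomposition is finite, and a finite subcover is extracted by choosing one covering set per slice. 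No Stone duality, no quotienting, and no action of \(\text{UA}(L)\) is invoked. Your concern about uncountably many bases when \(\dim H\geq 2\) is a sign that you are working with an infinite lattice the corollary does not address.

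A secondary problem is the intra-slice step. You propose to identify the slice over a fixed \(B\) with the Stone space \(S(B)\) of ultrafilters of \(B\), but contexts \(c=(B,P)\) correspond under \(c\mapsto\text{Th}(c)\) to arbitrary filters (indeed arbitrary theories), not to ultrafilters, and the basic open set \(U_c=\{c'\mid\text{Th}(c')\subseteq\text{Th}(c)\}\) is a principal down-set in the inclusion order on theories, not a set of the form \(\{F\mid p\in F\}\). So the slice topology is not the Stone topology, and compactness of the slice does not follow from Stone's theorem as stated; in the finite case it is immediate for the trivial reason that the slice is a finite set. If you want to salvage your approach, you should first restrict to the finite setting the paper actually works in, at which point both the slice compactness and the globalization become counting arguments and the Stone-theoretic scaffolding is unnecessary.
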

\begin{proof}
Since \(\mathcal{L}\) is a finite orthomodular lattice, there are only finitely many maximal Boolean sublattices \(B\) in \(\mathcal{L}\). Let \(\{B_1,B_2,\ldots,B_n\}\) be the set of all distinct bases for contexts in \(\mathcal{C}(\mathcal{L})\). Given an open cover \(\mathcal{U}\) of \(\mathcal{C}(\mathcal{L})\), for each basis \(B_i\), there exists at least one basic open set \(U_{c_i}\in\mathcal{U}\) that covers all contexts with the basis \(B_i\). This is because the open cover \(\mathcal{U}\) must contain open sets that encompass all contexts for each basis \(B_i\). We can select one such \(U_{c_i}\) for each \(B_i\), where \(c_i=(B_i, P_i)\) for some \(P_i\) (the specific choice of \(P_i\) is irrelevant for covering purposes).
\\\\We claim that the collection \(\{U_{c_1},U_{c_2},\ldots,U_{c_n}\}\) forms a finite subcover of \(\mathcal{U}\) that covers \(\mathcal{C}(\mathcal{L})\). To see why, consider any context \(c=(B,P)\) in \(\mathcal{C}(\mathcal{L})\). The basis \(B\) must be one of the \(B_i\), say \(B=B_k\). The basic open set \(U_{c_k}\) associated with the context \(c_k=(B_k, P_k)\) covers all contexts with the basis \(B_k\), including the context \(c\). Therefore, every context in \(\mathcal{C}(\mathcal{L})\) is covered by at least one of the sets \(U_{c_1}, U_{c_2}, \ldots, U_{c_n}\), and thus this collection forms a finite subcover of \(\mathcal{U}\). Hence, \((\mathcal{C}(\mathcal{L}), \mathcal{J})\) is compact, as every open cover has a finite subcover.
\end{proof}
\begin{cor}
    \((\mathcal{C}(\mathcal{L}),\mathcal{J})\) satisfies point-closedness for all clopen sets.
\end{cor}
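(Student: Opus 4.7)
The plan is to verify that every singleton \(\{c\} \subseteq \mathcal{C}(\mathcal{L})\) is closed in \((\mathcal{C}(\mathcal{L}), \mathcal{J})\), which is equivalent to showing that \(\mathcal{C}(\mathcal{L}) \setminus \{c\}\) is open. Fixing \(c = (B, P)\), I will exhibit for each \(c' = (B', P') \neq c\) a clopen neighborhood \(V_{c'}\) of \(c'\) that excludes \(c\); the union \(\bigcup_{c' \neq c} V_{c'}\) then coincides with the complement of \(\{c\}\) and is open as a union of opens, with each piece witnessing the clopen character promised by the statement.

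The construction of \(V_{c'}\) splits along whether the underlying bases agree. If \(B' \neq B\), I would take \(V_{c'} = U_{c'}\): by Definition \ref{bos} this basic open set is confined to contexts on basis \(B'\), so it misses \(c\), while Lemma \ref{une} guarantees \(c' \in U_{c'}\). If \(B' = B\), then \(c\) and \(c'\) are distinguished purely by their theories, so \(\text{Th}(c) \neq \text{Th}(c')\). When \(\text{Th}(c) \not\subseteq \text{Th}(c')\) the set \(U_{c'}\) again works, since the defining condition \(\text{Th}(c) \subseteq \text{Th}(c')\) for membership in \(U_{c'}\) fails. The only delicate subcase is \(\text{Th}(c) \subsetneq \text{Th}(c')\), where \(c\) sits inside every basic open set containing \(c'\); here I would circumvent the obstruction by selecting a witness \(q \in \text{Th}(c') \setminus \text{Th}(c)\) and forming the clopen set of same-basis contexts whose theory contains \(q\), which separates \(c'\) from \(c\).

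The main obstacle is precisely this last subcase. Because \(U_{c'}\) is defined by a downward-closed theory-inclusion condition, the basic opens themselves carry an intrinsic asymmetry that prevents them from separating \(c'\) from any context with strictly smaller theory on the same basis. Overcoming it requires appealing to the Boolean structure of each basis \(B\) and the finiteness of \(\mathcal{L}\) already exploited in the proof of compactness, to show that \(\mathcal{J}\)'s clopen algebra is strictly richer than just \(\{U_c : c \in \mathcal{C}(\mathcal{L})\}\) and in particular admits the complements needed to isolate \(c'\) within its \(B\)-stratum. Once the separators are produced uniformly across all three cases, their union realizes \(\mathcal{C}(\mathcal{L}) \setminus \{c\}\) as a union of clopen sets, establishing both that \(\{c\}\) is closed and that this closedness is witnessed at the level of clopen sets, as required.
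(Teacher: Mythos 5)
Your reading of the corollary diverges from the paper's: you interpret ``point-closedness'' as the \(T_1\) axiom (singletons are closed), whereas the paper's proof establishes a different property, namely that for every clopen \(S\) and every \(c\in S\) there is a clopen \(W\) with \(c\in W\subseteq S\), obtained by writing \(S\) as a finite union of finite intersections of basic open sets and extracting a basic open set through \(c\). These are not equivalent statements, so yours is not simply an alternative route to the same conclusion.

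More importantly, your argument has a genuine gap exactly where you flag it, and the gap cannot be closed by the appeal you sketch. In the subcase \(B'=B\) with \(\mathrm{Th}(c)\subsetneq\mathrm{Th}(c')\), you need an open set containing \(c'\) but not \(c\). Every basic open set \(U_{c''}\) of Definition \ref{bos} is downward closed under theory inclusion within a fixed basis: if \(c'\in U_{c''}\) then \(\mathrm{Th}(c')\subseteq\mathrm{Th}(c'')\), hence \(\mathrm{Th}(c)\subseteq\mathrm{Th}(c'')\) and \(c\in U_{c''}\) as well. Since every open set in the topology generated by these sets is a union of finite intersections of them, \emph{every} open neighborhood of \(c'\) contains \(c\). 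The upward set \(\{c''\mid B''=B,\ q\in\mathrm{Th}(c'')\}\) you propose as a separator is therefore not open in \(\mathcal{J}\) as the paper defines it, and invoking the ``richness of the clopen algebra'' is circular, since whether that set is clopen is precisely what is at issue. With the topology generated by the \(U_c\), the space in fact fails \(T_1\) whenever two same-basis contexts have strictly nested theories, so under your interpretation the statement is false as it stands; you would need either to enlarge \(\mathcal{J}\) (e.g.\ by adjoining complements of the \(U_c\) as subbasic opens) or to revert to the weaker property the paper actually proves.
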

\begin{proof}
Let \(S\) be a clopen set in \((\mathcal{C}(\mathcal{L}),\mathcal{J})\) and let \(c \in S\) be an arbitrary context in the set. Since \(S\) is clopen, it can be expressed as a finite Boolean combination of basic open sets. Without loss of generality, let's consider the case where \(S\) is expressed as a finite union of finite intersections of basic open sets:
\[S=\bigcup_{k=1}^n \bigcap_{j=1}^{m_k} U_{c_{kj}}.\]
where each \(U_{c_{kj}}\) is a basic open set corresponding to a context \(c_{kj}\).
\\\\Since \(c \in S\), there exists an index \(k'\) such that \(c \in \bigcap_{j=1}^{m_{k'}} U_{c_{k'j}}\). Define the set \(V=\bigcap_{j=1}^{m_{k'}} U_{c_{k'j}}\). By construction, \(V\) is an intersection of basic open sets, and \(c \in V \subseteq S\).
\\\\Now, we need to show that there exists a clopen set \(W\) such that \(c \in W \subseteq V\). Since \(c\) belongs to the intersection \(V\), it must belong to each of the intersecting basic open sets \(U_{c_{k'j}}\). We can take \(W\) to be one of these basic open sets that contains \(c\), say \(W=U_{c_{k'1}}\). By definition, basic open sets are clopen, so \(W\) is a clopen set satisfying \(c \in W \subseteq V \subseteq S\), which demonstrates the point-closedness condition for the topological space of contexts.
\end{proof}

\begin{cor}
    \((\mathcal{C}(\mathcal{L}),\mathcal{J})\) is zero-dimensional.
\end{cor}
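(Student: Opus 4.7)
The plan is to argue that the collection of basic open sets \(\{U_c \mid c \in \mathcal{C}(\mathcal{L})\}\) simultaneously serves as a base for \(\mathcal{J}\) and as a family of clopen sets. Since zero-dimensionality is defined by the existence of a base of clopen sets, establishing both of these properties immediately yields the claim.

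I would first invoke the preceding corollary, whose proof already asserts (and uses) the fact that every basic open set \(U_c\) is clopen. To justify this carefully, I would appeal to the finiteness of \(\mathcal{L}\): there are only finitely many maximal Boolean sublattices \(B_1,\dots,B_n\) and, within each basis, only finitely many contexts up to logical equivalence of the generating propositions, so \(\mathcal{C}(\mathcal{L})\) itself is finite. The complement of \(U_c\) in \(\mathcal{C}(\mathcal{L})\) is then the finite union of those basic open sets \(U_{c'}\) for which either \(B' \neq B\) or \(Th(c') \not\subseteq Th(c)\); a finite union of open sets is open, and since \(U_c\) is itself open by Definition \ref{bos}, \(U_c\) is clopen. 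Next I would verify that \(\{U_c\}\) is in fact a base for \(\mathcal{J}\): the topology was constructed from the basic open sets by means of the accessibility chains of Definition \ref{accchain}, so every open set in \(\mathcal{J}\) is a union of basic open sets, and for any context \(c\) lying in an open set \(V\) there exists a basic open set \(U_{c'}\) with \(c \in U_{c'} \subseteq V\).

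Combining these two observations, \(\{U_c\}\) is a base of clopen sets for \(\mathcal{J}\), so \((\mathcal{C}(\mathcal{L}),\mathcal{J})\) is zero-dimensional. The main point requiring care, rather than a serious obstacle, is checking that the \(U_c\) serve as a genuine base and not merely a subbase for \(\mathcal{J}\); even if one needed to pass through finite intersections of basic open sets, the conclusion would be unaffected, since finite intersections of clopen sets are again clopen and would still deliver a clopen base.
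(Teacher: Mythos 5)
Your argument is essentially the paper's own proof: both establish that each basic open set \(U_c\) is clopen by writing its complement as a union of basic open sets, and then observe that the \(U_c\) form a base for \(\mathcal{J}\), which is precisely the definition of zero-dimensionality. The only differences are your explicit appeal to the finiteness of \(\mathcal{C}(\mathcal{L})\) and the base-versus-subbase remark; note that both your write-up and the paper leave unargued the one delicate point, namely that \(U_{c'}\subseteq U_c^{c}\) when \(B'=B\) but \(Th(c')\not\subseteq Th(c)\) (a context \(c''\) with \(Th(c'')\subseteq Th(c')\) could still satisfy \(Th(c'')\subseteq Th(c)\)), which is where the real work of showing \(U_c^{c}\) is open actually lies.
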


\begin{proof}
    To prove that \(\mathcal{C}(\mathcal{L})\) is zero-dimensional, we need to show that the topology on \(\mathcal{C}(\mathcal{L})\) has a base consisting entirely of clopen sets. 
    \\\\Let \(c=(B,P)\) be a context in \(\mathcal{C}(\mathcal{L})\), where \(B\subset\mathcal{L}\) is a maximal Boolean sublattice of the orthomodular lattice \(\mathcal{L}\), and \(P\) is a subset of propositions in \(B\). Recall, the basic open set \(U_c\) associated with \(c\) is defined as:
\[
U_c=\{ c'=(B',P') \in \mathcal{C}(\mathcal{L}) \mid B'=B \text{ and } \text{Th}(c') \subseteq \text{Th}(c) \},
\]
where \(\text{Th}(c)\) is the set of all propositions in \(B\) that are logically entailed by the propositions in \(P\).
\\\\By definition, \(U_c\) is open because it is a basic open set in the topology on \(\mathcal{C}(\mathcal{L})\). Next, consider the complement \(U_c^c\) of \(U_c\). This set consists of all contexts \(c'\) such that either \(B'\neq B\) or \(\text{Th}(c') \not\subseteq \text{Th}(c)\). The set \(U_c^c\) can be written as a union of basic open sets corresponding to contexts where \(\text{Th}(c')\) does not entail \(\text{Th}(c)\). Since the union of open sets is open, \(U_c^c\) is open, implying that \(U_c\) is closed. Since \(U_c\) is both open and closed, it is clopen.
\\\\Since the topology on \(\mathcal{C}(\mathcal{L})\) is generated by basic open sets \(U_c\), and we have shown that each basic open set is clopen, the collection of these clopen sets forms a base for the topology on \(\mathcal{C}(\mathcal{L})\).
Finally, we verify zero-dimensionality. For any open set \(U \subset \mathcal{C}(\mathcal{L})\) and any point \(c \in U\), there exists a basic open set \(U_c\) such that \(c\in U_c\subseteq U\). Since \(U_c\) is clopen, it follows that the topology on \(\mathcal{C}(\mathcal{L})\) has a base consisting entirely of clopen sets.
\end{proof}

\begin{lemma}\label{stone}
    \(\mathcal{C}(\mathcal{L})\) is a Stone space.
\end{lemma}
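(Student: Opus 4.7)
\noindent The plan is to recognize that being a Stone space is the conjunction of three properties—compactness, Hausdorffness, and zero-dimensionality—of which the first and third have already been established in the preceding corollaries. The only remaining task is therefore to verify that \((\mathcal{C}(\mathcal{L}),\mathcal{J})\) is Hausdorff, and the main lever for this is the clopen basis of basic open sets \(U_c\) provided by the zero-dimensionality corollary.

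To show Hausdorffness I would take two distinct contexts \(c_1=(B_1,P_1)\) and \(c_2=(B_2,P_2)\) and split on whether their bases agree. If \(B_1\neq B_2\), then by Definition~\ref{bos} the basic open set \(U_{c_i}\) consists entirely of contexts whose basis equals \(B_i\); hence \(U_{c_1}\cap U_{c_2}=\emptyset\), and both sets are clopen by the zero-dimensionality corollary, so they provide the required separation immediately. If instead \(B_1=B_2=B\), then distinctness of \(c_1\) and \(c_2\) must be reflected in their theories (interpreting contexts as identified up to logical equivalence of their generating propositions, which is the only way the topology can tell them apart). Without loss of generality there is some \(q\in\text{Th}(c_1)\setminus\text{Th}(c_2)\), so \(\text{Th}(c_1)\not\subseteq\text{Th}(c_2)\), and hence \(c_1\notin U_{c_2}\). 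The clopen set \(U_{c_2}\) and its clopen complement \(U_{c_2}^{c}\) then serve as disjoint neighborhoods of \(c_2\) and \(c_1\) respectively.

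Having obtained Hausdorffness alongside the previously proven compactness and zero-dimensionality, the conclusion that \((\mathcal{C}(\mathcal{L}),\mathcal{J})\) is a Stone space follows directly from the definition. The principal technical delicacy lies in Case 2, namely the handling of formally distinct contexts sharing a common theory: the cleanest resolution is either to identify theory-equivalent contexts as a single point of \(\mathcal{C}(\mathcal{L})\) or to pass to the induced \(T_0\) quotient, both of which are consistent with the contextual interpretation and with the topological definitions laid out earlier in this section. I expect this identification to be the only subtle point of the argument, since everything else is now an assembly of corollaries already in hand.
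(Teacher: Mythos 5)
Your proposal is correct, and it actually does more work than the paper's own proof, which simply declares that the lemma ``follows directly from corollaries 1, 2, and 3.'' Those three corollaries establish compactness, point-closedness for clopen sets, and zero-dimensionality --- but none of them is literally the Hausdorff axiom, so the paper's citation leaves the separation property unverified. You correctly identify Hausdorffness as the missing ingredient and supply a direct argument: disjointness of basic open sets across distinct bases handles the first case, and for a shared basis the clopen set \(U_{c_2}\) together with its complement separates contexts whose theories are not mutually contained. (Note that since zero-dimensionality is already in hand, Hausdorffness reduces to the \(T_0\) condition, which is exactly what your case analysis establishes.) Your flagging of the genuinely delicate point --- that two formally distinct contexts \((B,P_1)\) and \((B,P_2)\) with \(\mathrm{Th}(c_1)=\mathrm{Th}(c_2)\) are topologically indistinguishable, so one must either identify theory-equivalent contexts or pass to the \(T_0\) quotient --- is a real issue that the paper never addresses; without such an identification the space fails to be Hausdorff and the lemma as stated is false. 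Your version buys explicitness and exposes an assumption the paper leaves implicit; the paper's version buys brevity at the cost of an unexamined gap.
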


\begin{proof}
    This follows directly from \textbf{corollaries 1, 2, and 3}.
\end{proof}

\subsection{Topos}

This subsection introduces key concepts related to quantum frames, dual algebras, and double-dual frames, emphasizing their role in capturing the logical structure of quantum frames. We conclude the overall section by formally defining the QCT.

\subsubsection{Quantum Frame}

\begin{definition}[Quantum Frame]
\textbf{}\\A \textbf{Quantum Frame} is a polymodal general frame \(F(L)=(C,\Gamma,\{R^{B_i}_p\},V)\) where:
\begin{itemize}
    \item \(C=C(L)\) is the set of contexts of an orthomodular lattice \(L\). For brevity, we denote it as \(C\).
    \item The accessibility relations \(\{R^{B_i}_p\}\) are defined as follows: For each proposition \(p\in L\) and each basis \(B_i\in B(L)\) (a maximal Boolean sublattice of \(L\)), \(R^{B_i}_p\subseteq C\times C\) is a relation on contexts such that for any two contexts \(c=(B,P)\) and \(c'=(B',P')\), \((c,c')\in R^{B_i}_p\) if and only if:
    \begin{enumerate}
        \item \(p\in Th(c)\), implying there exists some \(c''\in U_c\) such that \(c''=(B''=B,P'')\);
        \item \(p\) is the maximal element in \(Th(c'')\);
        \item \(c'\in A_1^{B}(c'')\).
    \end{enumerate}
    \item \(V\subseteq\mathcal{P}(C)\) is the set of clopen subsets of \(C\), closed under Boolean operations and modal operators. For a formula \(\omega\in V\), the modal operator \(\square^i_p\) is defined as:
    \[
    \square^i_p\omega=\{c\in C\mid\forall(c,c')\in R^{B_i}_p,\,c'\in\omega\}.
    \]
\end{itemize}
\end{definition}

\subsubsection{Quantum Formulae in \(V\)}

\begin{definition}[Quantum Formulae in \(V\)]
\textbf{}\\A quantum formula \(\varphi\) in \(V\) is defined recursively as follows:
\begin{itemize}
    \item A clopen set \(\omega\in V\) is a quantum formula.
    \item If \(\varphi\) is a quantum formula, then the negation \(\neg\varphi\) is also a quantum formula.
    \item If \(\varphi_1\) and \(\varphi_2\) are quantum formulae, then the conjunction \(\varphi_1 \wedge \varphi_2\) and the disjunction \(\varphi_1 \vee \varphi_2\) are also quantum formulae.
    \item If \(\varphi\) is a quantum formula and \(B_i\in B(L)\) is a basis, then the necessity \(\square^{B_i}_p\varphi\) is also a quantum formula.
    \item If \(\varphi_1\) and \(\varphi_2\) are quantum formulae, then the implication \(\varphi_1\rightarrow\varphi_2\) is also a quantum formula, corresponding to set inclusion.
\end{itemize}
\end{definition}

\begin{prop}\label{landmod}
    \(\square^{B_i}_p\omega\land\square^{B_i}_p\omega'\Leftrightarrow\square^{B_i}_p(\omega\land\omega')\) holds in a quantum frame, where \(\omega,\omega'\in V\).
\end{prop}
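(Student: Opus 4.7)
The plan is to reduce the claim to the well-known set-theoretic identity that universal quantification distributes over intersection, using only the definitional unfolding of $\square^{B_i}_p$ given in the Quantum Frame definition. Since $V$ is closed under Boolean operations and (by the correspondence between quantum formulae and clopen sets) conjunction $\wedge$ in $V$ is just intersection of the underlying subsets of $C$, I would first rewrite $\omega \wedge \omega'$ as $\omega \cap \omega'$ and each $\square^{B_i}_p\psi$ as $\{c \in C \mid \forall c'\,((c,c') \in R^{B_i}_p \Rightarrow c' \in \psi)\}$.

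Next I would prove the two inclusions separately (though they collapse into a single biconditional chain). For the forward direction, fix $c \in \square^{B_i}_p\omega \cap \square^{B_i}_p\omega'$ and any $c'$ with $(c,c') \in R^{B_i}_p$. Membership in the first conjunct gives $c' \in \omega$, and membership in the second gives $c' \in \omega'$, so $c' \in \omega \cap \omega'$; since $c'$ was arbitrary, $c \in \square^{B_i}_p(\omega \wedge \omega')$. For the converse, if $c \in \square^{B_i}_p(\omega \wedge \omega')$ and $(c,c') \in R^{B_i}_p$, then $c' \in \omega \cap \omega'$, which immediately yields $c' \in \omega$ and $c' \in \omega'$ separately, so $c$ lies in both $\square^{B_i}_p\omega$ and $\square^{B_i}_p\omega'$.

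There is no real obstacle here beyond bookkeeping: the identity is the standard $\mathbf{K}$-axiom style distribution of $\square$ over conjunction, which holds in any Kripke-style semantics regardless of the particular accessibility relation $R^{B_i}_p$. The only subtlety worth flagging is the well-definedness issue, namely that $\omega \wedge \omega'$ again lies in $V$ so that $\square^{B_i}_p(\omega \wedge \omega')$ makes sense; this is immediate from the stipulation that $V$ is closed under Boolean operations and modal operators in the Quantum Frame definition. No properties of $R^{B_i}_p$ specific to contextual quantum logic (e.g.\ the three clauses involving $Th(c)$, maximality of $p$, or $A^B_1(c'')$) are invoked, so the proof is uniform in the index $(B_i,p)$.
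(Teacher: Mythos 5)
Your proof is correct and takes essentially the same approach as the paper's: both unfold \(\square^{B_i}_p\) into its Kripke-style definition and establish the two inclusions by distributing the universal quantifier over the intersection, with no appeal to any special property of \(R^{B_i}_p\). (Minor note: the paper's bullet labels \(\Rightarrow\)/\(\Leftarrow\) are swapped relative to the inclusions actually proved in each bullet, whereas your write-up keeps the directions straight and additionally flags the well-definedness of \(\omega\wedge\omega'\in V\), which the paper leaves implicit.)
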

\begin{proof}\textbf{}
    \begin{itemize}
        \item \(\Rightarrow\): The LHS applies the modal operator to the intersection of two sets:
        \[
        \square^{B_i}_p(\omega \cap \omega')=\{c\in\mathcal{C}\mid\forall c'\in\mathcal{C},(c,c')\in R^{B_i}_p\implies c'\in\omega\cap\omega'\}.
        \]
        If \(c\) is in \(\square^{B_i}_p(\omega\cap\omega')\), then for all \(c'\) accessible from \(c\), \(c'\) must be in both \(\omega\) and \(\omega'\). This directly implies that \(c\) satisfies both \(\square^{B_i}_p\omega\) and \(\square^{B_i}_p\omega'\), and therefore \(c\) is in their intersection.

        \item \(\Leftarrow\): The RHS takes the intersection of the modal transformations of two sets:
        \begin{equation*}
\begin{aligned}
    \square^{B_i}_p\omega\cap\square^{B_i}_p\omega'&=\{c\in\mathcal{C}\mid\forall c'\in\mathcal{C},(c,c')\in R^{B_i}_p\implies c'\in\omega\} \\
    &\quad\cap\{c\in\mathcal{C}\mid\forall c'\in\mathcal{C},(c,c')\in R^{B_i}_p\implies c'\in\omega'\}.
\end{aligned}
\end{equation*}

        If \(c\) is in \(\square^{B_i}_p\omega\cap\square^{B_i}_p\omega'\), then for all \(c'\) accessible from \(c\) under \(R^{B_i}_p\), \(c'\) must be in both \(\omega\) and \(\omega'\). Therefore, \(c'\) must be in \(\omega\cap\omega'\), implying \(c\) is in \(\square^{B_i}_p(\omega\cap\omega')\).
    \end{itemize}
    This completes the proof.
\end{proof}

\subsubsection*{Clopen Sets}

\begin{definition}[Clopen Sets]
\textbf{}\\In the context of a quantum frame \(F(L)=(C,\{R_i\},V)\), a clopen set is a subset of \(C(L)\) that is both open and closed under the topology induced by the lattice operations and the modal structure \citep{johnstone1982stone}.

The space \(C(L)\) consists of all contexts, where each context corresponds to a unique combination of a maximal Boolean sublattice \(B\) of \(L\) and a set \(P\) of propositions considered \textit{true} within that sublattice. The topology on \(C(L)\) is generated by the basic open sets, which are defined as:
\[
U_c=\{c'\in C(L)\mid Th(c')\subseteq Th(c)\}.
\]
where \(c=(B,P)\) is a context, and \(Th(c)\) denotes the theory generated by \(P\) within \(B\).

Clopen sets are constructed using finite unions, intersections, and complements of basic open sets \citep{simmons1963introduction}. These operations ensure that the clopen nature is preserved due to the finite and closed topology of \(C(L)\).
\end{definition}

\begin{theorem}
    In a quantum frame \(F(L)=(C,\{R_i\},V)\), there is a direct correspondence between clopen sets in \(V\) and propositions in the Boolean sublattices of \(L\).
\end{theorem}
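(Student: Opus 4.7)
The plan is to exploit Stone duality: Lemma \ref{stone} has established that \(C(\mathcal{L})\) is a Stone space, so its collection of clopen sets forms a Boolean algebra, and by Stone's Representation Theorem this algebra is isomorphic to a Boolean algebra whose elements we now want to realize concretely as propositions sitting in the Boolean sublattices of \(L\). Both directions of the correspondence will be built from the basic-open-set construction \(c \mapsto U_c\) and its inverse, combined with the closure properties of the clopen base established in the zero-dimensionality corollary.

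For the forward direction, given a proposition \(p\) in a Boolean sublattice \(B\) of \(L\), I would send \(p\) to the clopen set \(\Phi(p) = U_{c_p}\) where \(c_p = (B,\{p\})\). By Definition \ref{bos}, \(U_{c_p}\) collects precisely those contexts sharing the basis \(B\) whose theory is refined by the logical consequences of \(p\), and the zero-dimensionality corollary guarantees that \(U_{c_p}\) is indeed clopen. For the reverse direction, every clopen set \(S\) admits, by compactness together with the clopen base, a finite Boolean expression in basic open sets \(U_{c_i}\) with \(c_i = (B_i, P_i)\); to each \(U_{c_i}\) I would associate the proposition \(\bigwedge_{q \in Th(c_i)} q \in B_i\), and then combine these propositions via the matching Boolean operations inside their respective sublattices. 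Preservation of meets, joins, and complements under \(\Phi\) should follow from unwinding the set-theoretic definitions of intersection, union, and complement of basic open sets.

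The main obstacle will be showing that the correspondence is well-defined and bijective when a clopen set is built from basic open sets associated with several distinct Boolean sublattices. Since orthomodular meets and joins across different maximal Boolean sublattices need not remain in any one sublattice, the reverse map \(\Psi\) naturally takes values in the disjoint union \(\bigsqcup_{B \in \mathcal{B}(L)} B\), with representatives to be identified via the unitary automorphisms of Section 4 whenever two bases are linked by an element of \(\mathrm{UA}(L)\). Verifying that this identification coincides exactly with the equivalence induced by logical entailment on the theories \(Th(c)\) is the key technical step; once it is established, \(\Phi \circ \Psi = \mathrm{id}\) and \(\Psi \circ \Phi = \mathrm{id}\) reduce to a sublattice-by-sublattice application of the standard Stone duality. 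A secondary subtlety is that different generating sets \(P\) in the same basis may produce the same theory \(Th(c)\), so \(\Phi\) must descend to logical-equivalence classes of propositions; this quotient is precisely what the definition of \(Th(c)\) already enforces, and confirming the match is the final bookkeeping required to close the argument.
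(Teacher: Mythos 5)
There is a genuine gap in the forward direction of your correspondence, and it is the load-bearing step. You send a proposition \(p\) in a sublattice \(B\) to the basic open set \(U_{c_p}\) with \(c_p=(B,\{p\})\). By Definition \ref{bos}, \(U_{c_p}\) is the set of contexts \(c'=(B,P')\) with \(Th(c')\subseteq Th(c_p)\), i.e.\ the contexts whose theories are \emph{weaker} than (entailed-by-\(p\)-only) that of \(c_p\) — not the set of contexts in which \(p\) holds. The paper instead uses the standard Stone-duality map \(\phi(p)=\{c'\in C\mid p\in Th(c')\}\), the satisfaction set of \(p\). With your choice the claimed "preservation of meets, joins, and complements by unwinding the definitions" fails: one computes \(U_{c_p}\cap U_{c_q}=U_{c_{p\vee q}}\) (since \(Th(c_p)\cap Th(c_q)\) is exactly the theory of \(p\vee q\)), so intersection tracks join rather than meet; \(U_{c_{p\wedge q}}\) strictly contains \(U_{c_p}\cup U_{c_q}\) in general, so there is no Boolean operation it maps to; and \(U_{c_{\neg p}}\) lies entirely inside the basis \(B\) while the set-theoretic complement of \(U_{c_p}\) contains every context with a different basis, so complementation cannot be preserved at all. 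The map is therefore not the Boolean correspondence the theorem asserts, and injectivity alone does not rescue it.

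The rest of your plan is reasonable and in places more careful than the paper: your reverse map via \(\bigwedge_{q\in Th(c_i)}q\) is legitimate in the finite-dimensional setting, and your observation that a clopen set built from basic opens over several distinct maximal Boolean sublattices forces an identification across bases (via \(\mathrm{UA}(L)\) or via entailment on theories) is a real difficulty that the paper's own surjectivity argument quietly skips by allowing a clopen set to correspond to "a proposition or set of propositions." To repair your argument, replace the forward map by the satisfaction set \(\{c'\mid p\in Th(c')\}\), verify that this set is actually clopen in the topology generated by the \(U_c\) (this is not automatic, since satisfaction sets are up-sets in the entailment order while the \(U_c\) are down-sets), and only then run the Stone-duality bookkeeping you describe.
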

\begin{proof}
    Let \(F(L)=(C,\{R_i\},V)\) be a quantum frame where \(V\) is the set of clopen sets, defined as subsets of \(C\) that are both open and closed under the topology generated by basic open sets \(U_c=\{c'\in C\mid Th(c')\subseteq Th(c)\}\).
    \begin{enumerate}
        \item \textbf{Clopen Sets as Propositional Sets}: By definition, the basic open set \(U_c\) corresponds to the set of all contexts \(c'=(B',P')\) such that \(Th(c')\subseteq Th(c)\). This implies that \(U_c\) is entirely determined by the propositions \(Th(c)\) that are true in the Boolean sublattice \(B\).
        
        A clopen set \(\omega\in V\) can be expressed as a finite union, intersection, or complement of such basic open sets \(U_c\). Therefore, each clopen set in \(V\) is ultimately defined by the collection of contexts in \(C\), which are in turn defined by the propositions in their respective Boolean sublattices.
        \item \textbf{Correspondence between Clopen Sets and Propositions}: Consider a maximal Boolean sublattice \(B\subset L\). The propositions within \(B\) determine the theories \(Th(c)\) for contexts \(c=(B,P)\) associated with \(B\). Since clopen sets in \(V\) are constructed from basic open sets corresponding to these contexts, the clopen sets are directly associated with the propositions in \(B\).

        To establish a bijective correspondence, we construct the following mapping:
        \begin{align*}
            \phi: \{\text{Propositions in } B\} &\longrightarrow \{\text{Clopen Sets in }V\} \\
            p &\longmapsto\{c'=(B',P')\in C\mid p\in Th(c')\}
        \end{align*}
        
        This map \(\phi\) is well-defined because:
        \begin{itemize}
            \item Each proposition \(p\) in \(B\) corresponds to the set of contexts where \(p\) is true, which forms a clopen set in \(V\).
            \item Each clopen set in \(V\) can be represented by a finite collection of such propositions, given the construction from basic open sets.
        \end{itemize}
        \item \textbf{Bijectivity}: 
        \begin{itemize}
            \item \textbf{Injectivity}: Assume \(\phi(p)=\phi(q)\) for propositions \(p,q\in B\). This implies that \(p\) and \(q\) are true in exactly the same set of contexts. Since the Boolean algebra \(B\) satisfies classical logic, \(p=q\) follows.
            \item \textbf{Surjectivity}: For any clopen set \(\omega \in V\), \(\omega\) is a finite union, intersection, or complement of basic open sets \(U_c\), each of which is associated with a proposition in some Boolean sublattice \(B\). Therefore, \(\omega\) can be mapped back to a unique proposition or set of propositions in \(B\).
        \end{itemize}
    \end{enumerate}

Thus, \(\phi\) is a bijection, establishing a direct correspondence between clopen sets in \(V\) and propositions in the Boolean sublattices of \(L\).

\end{proof}

\subsubsection{Dual Algebra and Dual Frame}

\begin{definition}[Dual Algebra/Dual Frame]
\textbf{}\\Given a polymodal general frame \(G=(W,R_1,R_2,\dots,R_n)\), its dual algebra is a Boolean algebra with \(n\) modal operators, forming a polymodal algebra \citep{nlab:algebraic_model_for_modal_logics}. The dual algebra \(G^+\) is defined as \(\langle X,\cap,\cup,-,\square_1,\square_2, \dots,\square_n\rangle\), where:
\begin{itemize}
    \item \(X\) is the set of clopen subsets of \(W\) \citep{johnstone1982stone}.
    \item \(\cap,\cup,-\) are the standard Boolean operations on sets.
    \item \(\square_i\) is the modal operator corresponding to the relation \(R_i\), defined as \(\square_i U=\{w\in W\mid\forall v(wR_iv\implies v\in U)\}\) for each \(i=1,2,\dots,n\) \citep{blackburn_rijke_venema:modal_logic}.
\end{itemize}

The dual frame of the dual algebra \(G^+\) is a polymodal general frame \(G^+=(F,R'_1,R'_2,\dots,R'_n,V)\), where:
\begin{itemize}
    \item \(F\) is the set of ultrafilters of the dual algebra \(G^+\) \citep{johnstone1982stone}.
    \item \(R'_i\) is the relation on ultrafilters corresponding to the modal operator \(\square_i\), defined as \((f,f')\in R'_i\) if and only if \(\forall U\in X(\square_iU\in f\implies U\in f')\) for each \(i=1,2,\dots,n\) \citep{nlab:algebraic_model_for_modal_logics, blackburn_rijke_venema:modal_logic, chagrov2000}.
    \item \(V\) is the set of clopen subsets of \(F\), corresponding to the elements of the dual algebra \(G^+\) \citep{johnstone1982stone}.
\end{itemize}
\end{definition}
\begin{definition}[Quantum Contextual Topos]
    \textbf{}\\The \textbf{Quantum Contextual Topos} (QCT) is formally defined as the category of Quantum Frames over the category of orthomodular lattices.
\end{definition}

\section{Internal Logic of QCT}

In this section, we delve into the internal logic of the Quantum Contextual Topos. The discussion begins by establishing the necessary theoretical groundwork, including the adaptation of Stone’s Representation Theorem within the QCT context. We then proceed to construct the double-dual frame, an essential component that guarantees the logical consistency and bijective correspondence between quantum contexts and their classical counterparts. By proving the preservation of logical relations and valuations through the isomorphism \(\Theta\), we solidify the argument that the QCT framework  aligns with the classical logical approach to quantum reasoning.

\begin{definition}[Stone's Representation Theorem]
\textbf{}\\\textbf{Stone's Representation Theorem} states that for any Boolean algebra \(\mathcal{B}\), there exists a Stone space \(\mathcal{X}\) such that \(\mathcal{B}\) is isomorphic to the Boolean algebra of clopen sets of \(\mathcal{X}\). Conversely, for any Stone space \(\mathcal{X}\), the collection of its clopen sets forms a Boolean algebra that is isomorphic to some Boolean algebra \(\mathcal{B}\).
\end{definition}
In the Quantum Contextual Topos, the space of contexts \((\mathcal{C}(\mathcal{L}),\mathcal{J})\) of an orthomodular lattice \(\mathcal{L}\) forms a Stone space. By introducing modal operators, however, we introduce additional complexity. We will adapt the theorem to account for the slightly different structure.

\begin{theorem}
    The internal logic of the topos is propositional polymodal logic.
\end{theorem}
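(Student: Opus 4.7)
The plan is to adapt the Jónsson--Tarski style duality for modal algebras to the polymodal, contextual setting, following the outline the author has set up with the dual algebra / dual frame definitions. Concretely, I would argue that the internal logic of the topos coincides with the equational theory of the dual algebra of a Quantum Frame, show that this dual algebra is a polymodal Boolean algebra, and then verify via a double-dual argument that this algebra fully captures the Quantum Frame up to isomorphism. Because the topos language is, by construction, built from clopen sets together with Boolean and modal connectives, if that algebra is a polymodal Boolean algebra and its equational theory is captured in the frame semantics, then the internal logic must be propositional polymodal logic.

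First I would take a Quantum Frame \(F(L)=(C,\Gamma,\{R^{B_i}_p\},V)\) and form its dual \(F(L)^+ = \langle V,\cap,\cup,-,\{\square^{B_i}_p\}\rangle\). The Boolean part is already handled: by Lemma \ref{stone}, \(C\) is a Stone space, and by the earlier theorem establishing the bijection \(\phi\) between clopen sets and propositions, \(V\) is closed under \(\cap,\cup,-\) and forms a Boolean algebra. For the modal part, I would verify the two structural laws required to call \(F(L)^+\) a polymodal Boolean algebra with operators: normality \(\square^{B_i}_p C = C\), and conjunctive additivity \(\square^{B_i}_p(\omega \cap \omega') = \square^{B_i}_p\omega \cap \square^{B_i}_p\omega'\). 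The latter is precisely Proposition \ref{landmod}; the former follows directly from the universal-quantifier definition of \(\square^{B_i}_p\) on the top element. This establishes that every axiom of classical propositional polymodal logic \(K_{\{(i,p)\}}\) is validated by \(F(L)^+\), since these are exactly the Boolean axioms together with the \(K\) and necessitation schemes.

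Next I would build the double-dual frame \((F(L)^+)_+\) out of the ultrafilters of \(F(L)^+\), with accessibility relations defined (as in the Dual Frame definition) by \((f,f') \in R^{\prime B_i}_p \iff \forall U \in V\,(\square^{B_i}_p U \in f \Rightarrow U \in f')\). I would then define the canonical map \(\Theta : C \to (F(L)^+)_+\) sending a context \(c\) to the ultrafilter \(\{\omega \in V \mid c \in \omega\}\) and argue in three steps: (a) \(\Theta\) is a well-defined bijection of underlying Stone spaces, which follows from the usual Stone representation since \(C\) is already a Stone space with \(V\) as its clopen algebra; (b) \(\Theta\) transports each \(R^{B_i}_p\) to \(R^{\prime B_i}_p\), using one direction from the definition of \(\square^{B_i}_p\) and, for the converse, the ``existence of accessible witnesses'' argument that is standard in canonical-frame constructions, invoked ultrafilter by ultrafilter; (c) \(\Theta\) respects valuations, since \(c \in \omega\) iff \(\omega \in \Theta(c)\). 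This gives a frame isomorphism \(F(L) \cong (F(L)^+)_+\), i.e., a completeness statement relative to the algebraic semantics.

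Combining these pieces, the internal language of the topos, built from clopen sets under \(\cap,\cup,-\) and the operators \(\square^{B_i}_p\), is precisely the language of a polymodal Boolean algebra, and the valid sequents in the topos are exactly those valid in the class of such algebras — i.e., propositional polymodal logic in the sense of the Polymodal Logic definition given earlier. The main obstacle I expect is step (b) above: verifying that the accessibility relations \(R^{B_i}_p\), which are indexed by propositions \(p\) and bases \(B_i\) and are defined through the three-part condition involving \(Th(c)\), maximality of \(p\), and the immediately accessible set \(A_1^B(c'')\), correspond \emph{exactly} to the ultrafilter-defined relations \(R^{\prime B_i}_p\). This is the nontrivial content of the polymodal Stone duality in this setting, because the indexing by \((B_i,p)\) means one must track each relation separately, and the ``maximality of \(p\) in \(Th(c'')\)'' clause in particular must be matched to a condition on ultrafilters via the correspondence \(\phi\) between propositions and clopen sets.
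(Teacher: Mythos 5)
Your plan follows the same architecture as the paper's proof: form the dual algebra of the Quantum Frame, pass to the double-dual frame of ultrafilters, and exhibit an isomorphism \(\Theta\) that is bijective and preserves accessibility relations and valuations. There are two points of divergence worth noting. First, you define \(\Theta(c)\) as the principal ultrafilter \(\{\omega\in V\mid c\in\omega\}\), the textbook canonical map, which is automatically an ultrafilter on the clopen algebra of a Stone space; the paper instead sets \(\Theta(c)=F_c=\{\omega\in V\mid U_c\subseteq\omega\}\cup\bigcup M_{B_i,p}\), a filter generated by the basic open set \(U_c\) together with sets of modal formulas, and must then argue separately that this filter is maximal. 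Your choice is cleaner on this point, since the paper's \(F_c\) is an ultrafilter only when \(U_c\) behaves like an atom of the clopen algebra, a fact the paper asserts rather than proves. Your explicit verification of normality and conjunctive additivity, which justifies calling the dual a polymodal Boolean algebra with operators, is also a step the paper uses implicitly (via Proposition \ref{landmod}) but never isolates.

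The substantive caveat is your step (b). You correctly identify the correspondence between the concretely defined relations \(R^{B_i}_p\) (via \(Th(c)\), the maximality of \(p\) in \(Th(c'')\), and the accessibility chain \(\mathcal{A}_1^{B}\)) and the ultrafilter relations \(R'^{B_i}_p\) as the main obstacle, but you defer it to the ``standard existence of accessible witnesses'' argument. That standard lemma shows the canonical relation defined \emph{from} the algebra is well behaved; it does not show that a pre-existing, concretely defined frame relation coincides with the canonical one. That coincidence is exactly the content of the paper's corollary on preservation of accessibility relations (its two cases \(B=B_i\) and \(B\neq B_i\)), and it must be argued from the three-clause definition of \(R^{B_i}_p\) rather than imported from general modal duality. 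As a proposal your route is sound and essentially the paper's, but this step is the proof, not a routine appendix to it.
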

    To prove the theorem, we must show that for any quantum frame \(\mathcal{F}(\mathcal{L})\) associated with an orthomodular lattice \(\mathcal{L}\), the internal logic of the Quantum Contextual Topos (QCT) can be identified with propositional polymodal logic. Specifically, this involves demonstrating that:
    \begin{enumerate}
        \item \textbf{Double-Dual Frame Identity}: The quantum frame \(\mathcal{F}(\mathcal{L})\) is isomorphic to its double-dual \(\mathcal{F}(\mathcal{L})^{++}\), thus ensuring that the logical structure is preserved under dualization.
        \item \textbf{Preservation of Logical Relations and Valuations}: The isomorphism \(\Theta\) preserves both the accessibility relations and the valuations of quantum propositions.
    \end{enumerate}
    The proof will proceed in three main steps:
    \begin{enumerate}
        \item \textbf{Constructing the Double-Dual Frame}: We need to define the double-dual frame \(\mathcal{F}(\mathcal{L})^{++}\) and show that it is structurally identical to the original frame \(\mathcal{F}(\mathcal{L})\). 
        \item \textbf{Defining the Isomorphism \(\Theta\)}: We will construct an isomorphism \(\Theta\) between the original frame \(\mathcal{F}(\mathcal{L})\) and its double-dual \(\mathcal{F}(\mathcal{L})^{++}\). This mapping must be shown to be bijective, and it must preserve the logical structure.
        \item \textbf{Verifying Preservation of Logical Relations}: We will prove that the isomorphism \(\Theta\) preserves the accessibility relations and the valuations within the logical system.
    \end{enumerate}
    \subsection*{Assumptions}
Before proceeding with the detailed proof, we outline the major assumptions that are implicitly guiding the proof. These assumptions are critical to ensure the validity and logical consistency of the arguments presented.

\begin{enumerate}
    \item \textbf{Orthomodular Lattice Structure:} We assume that the orthomodular lattice \(\mathcal{L}\) associated with the quantum frame \(\mathcal{F}(\mathcal{L})\) possesses the standard algebraic properties, including orthocomplementation, orthomodularity, and the existence of maximal Boolean sublattices. These properties are necessary for the proper definition of contexts and the construction of Boolean sublattices within the lattice.

    \item \textbf{Properties of the Topology on Contexts:} We assume that the topology on the set of contexts \(C(\mathcal{L})\) behaves classically, supporting the definition and manipulation of basic open sets and clopen sets. This includes the assumption that \(C(\mathcal{L})\) is compact and that clopen sets are closed under intersections and unions. This assumption is justified by \textbf{Lemma \ref{stone}}.

    \item \textbf{Closure Properties of Clopen Sets:} We assume that the set of clopen subsets \(V\) in \(\mathcal{F}(\mathcal{L})\) is rich enough to support all necessary logical operations, including those required by modal logic, and that it satisfies the closure properties necessary for the ultrafilter construction.

\end{enumerate}

    \subsection*{Proof}
    \begin{proof}
    Let \(\mathcal{F}(\mathcal{L})=(\mathcal{C},\{R_i^j\},V)\) be the quantum frame associated with an orthomodular lattice \(\mathcal{L}\). The dual algebra \(\mathcal{F}(\mathcal{L})^+\) of \(\mathcal{F}(\mathcal{L})\) is defined as \(\langle X,\cap,\cup,-,\square_i^j\rangle\), where \(X\) is the set of clopen subsets of \(\mathcal{C}\), and \(\square_i^j\) are the modal operators corresponding to the relations \(R_i^j\).
    \\\\The double-dual \(\mathcal{F}(\mathcal{L})^{++}\) is the dual frame of the dual algebra \(\mathcal{F}(\mathcal{L})^+\). It is a polymodal general frame \((F,\{R_i'^j\},V')\), where \(F\) is the set of ultrafilters of \(\mathcal{F}(\mathcal{L})^+\), \(R_i'^j\) are the relations on ultrafilters corresponding to the modal operators \(\square_i^j\), and \(V'\) is the set of clopen subsets of \(F\).
    \\\\We will prove the duality by constructing an isomorphism \(\Theta: \mathcal{F}(\mathcal{L}) \rightarrow \mathcal{F}(\mathcal{L})^{++}\) between the original quantum frame and its canonical double-dual. This mapping must be bijective between the set of worlds in both frames (contexts and ultrafilters, respectively) and must preserve the logical properties. Specifically, \(\Theta\) must be bijective, preserve accessibility relations, and preserve valuations. Once such a mapping is constructed and validated, we will have successfully demonstrated that any formula in the propositional polymodal logic of \(\mathcal{F}(\mathcal{L})^{++}\) is equivalent to a formula in \(\mathcal{F}(\mathcal{L})\) for any quantum frame.

    \subsection{Ultrafilter Construction for Contexts}
    To construct the isomorphism, we define \(\Theta\) such that each context \(c \in \mathcal{C}\) is assigned a filter on \(V\) in \(\mathcal{F}(\mathcal{L})^{++}\) (i.e., \(\Theta(c)\equiv F_c\)) as follows:

    \begin{definition}[Filter for a Context \(c\)]
    \textbf{}\\For a context \(c=(B,P)\) in \(\mathcal{C}\):
        \[
        F_c=\{\omega\in V\mid U_c\subseteq\omega\}\cup\bigcup_{B_i\in\mathcal{B}}\bigcup_{p\in P}M_{B_i,p}.
        \]
        where:
        \begin{itemize}
            \item \(U_c\) is the basic open set of context \(c\), encompassing all contexts sharing the same basis \(B\) with theories (logical consequences) that are a subset of or equal to \(Th(c)\).
            \item \(M_{B_i,p}\) represents the set of modal formulas satisfied at the context, covering all different modal operators in the logic, defined as:
            \[
            M_{B_i,p}=\{\square^{B_i}_p\omega\mid\omega\in V,\forall c'\in\mathcal{A}_1^{B_i}(c''),c'\in\omega\}.
            \]
        \end{itemize}
        Here, \(\square^{B_i}_p\omega\) denotes the application of the modal operator to \(\omega\) and \(\mathcal{A}_1^{B_i}\) is the basis-dependent accessibility chain (\textbf{def \ref{accchain}}) of depth \(1\).
    \end{definition}\textbf{}
    \begin{aside}
        \textbf{Aside:} The interaction between the modal operators and a context's propositions will be explicated: The modal operator \(\square^{B_i}_p\) represents the necessity of a formula \(\omega\) across all contexts \(c'\) accessible from \(c\) under the relation \(R^{B_i}_p\). Including \(M_{B_i,p}\) in the filter \(F_c\) ensures that \(F_c\) not only reflects the propositions directly associated with the context \(c\) but also those that must hold in all accessible contexts, capturing the modal structure of the quantum frame.
        
        Regarding the modal operator's impact on the structure of \(F_c\), the sets \(M_{B_i,p}\) expand \(F_c\) beyond the basic open sets \(U_c\) by incorporating necessary modal formulas, ensuring that \(F_c\) is closed under the operations defined by the modal operators. This inclusion maintains logical consistency across contexts and aligns \(F_c\) with the orthomodular structure of the lattice \(\mathcal{L}\).
    \end{aside}\textbf{}

    \begin{cor}
        \(F_c\) is a filter on \(V\).
    \end{cor}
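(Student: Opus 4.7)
The plan is to verify that $F_c$ satisfies the four defining axioms of a proper filter on the Boolean algebra $V$ of clopen subsets of $\mathcal{C}$: non-emptiness (containing the unit), upward closure in $V$, closure under binary intersection, and properness ($\emptyset \notin F_c$). Non-emptiness is essentially free: $U_c \subseteq U_c$ places $U_c \in F_c$, and $U_c \neq \emptyset$ by Lemma \ref{une}; moreover, $\mathcal{C}$ trivially satisfies $U_c \subseteq \mathcal{C}$, so $\mathcal{C} \in F_c$, giving the unit.

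For upward closure and intersection closure, I would treat the two defining pieces of $F_c$ separately. The principal-filter component $\{\omega \in V \mid U_c \subseteq \omega\}$ is a principal filter generated by $U_c$ and satisfies both closure properties immediately. The modal component $M_{B_i, p}$ is more delicate because its elements have the syntactic shape $\square^{B_i}_p \eta$, so it is not upward closed in $V$ in isolation; the key auxiliary claim I would establish is that every $\square^{B_i}_p \eta \in M_{B_i, p}$ already satisfies $U_c \subseteq \square^{B_i}_p \eta$. Granting this, any superset of a modal element inherits $U_c$ as a subset and therefore lies in the principal-filter piece, giving upward closure of the union. For intersection, two elements of $M_{B_i, p}$ sharing indices combine via Proposition \ref{landmod}: $\square^{B_i}_p \eta_1 \cap \square^{B_i}_p \eta_2 = \square^{B_i}_p(\eta_1 \cap \eta_2)$, and the accessibility condition defining $M_{B_i, p}$ is preserved under intersecting the $\eta_i$. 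Cross-type and mixed-index intersections again reduce to the auxiliary inclusion, since their intersection still contains $U_c$. Properness then drops out: once every element of $F_c$ is known to contain $U_c$, none can be empty because $U_c \neq \emptyset$.

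The hard part will be proving the auxiliary inclusion $U_c \subseteq \square^{B_i}_p \eta$ for each $\square^{B_i}_p \eta \in M_{B_i, p}$. This requires unpacking the accessibility relation $R^{B_i}_p$ from the quantum frame definition together with the basis-dependent accessibility chain $\mathcal{A}_1^{B_i}$ of Definition \ref{accchain}, and showing that the one-step accessibility condition witnessing membership in $M_{B_i, p}$ (stated via an intermediate $c'' \in U_c$) propagates uniformly to every context in $U_c$. Intuitively, contexts in $U_c$ have theories contained in $Th(c)$, and the construction of $R^{B_i}_p$ funnels accessibility through the same witnesses in $U_c$, so any modal consequence surviving at $c$ survives at every such $c'$. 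Once this monotonicity-along-$U_c$ is in place, the remaining filter axioms fall out by the case analysis above, and the corollary follows.
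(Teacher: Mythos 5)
Your proposal takes essentially the same route as the paper: verify non-emptiness via \(U_c\in F_c\) (Lemma \ref{une}), then handle upward closure and intersection closure by splitting \(F_c\) into its principal-filter component \(\{\omega\mid U_c\subseteq\omega\}\) and its modal component \(\bigcup M_{B_i,p}\), using Proposition \ref{landmod} for intersections of modal formulas. Two small differences are worth noting, both in your favor. First, the paper's upward-closure argument for modal formulas rests on the bare assertion that \(U_c\subseteq\square^{B_i}_p\omega''\) ``by definition of the modal operator''; you correctly isolate this as the load-bearing auxiliary claim and observe that it actually requires unpacking \(R^{B_i}_p\) and \(\mathcal{A}_1^{B_i}\) -- your sketch of why it holds (monotonicity of the accessibility construction along \(Th(c')\subseteq Th(c)\)) is the argument the paper omits, and to make your proof complete you would still need to write it out. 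Second, the paper's intersection case analysis only treats two principal-filter elements or two modal elements with the \emph{same} indices \((B_i,p)\); you explicitly cover the mixed-type and mixed-index intersections by reducing them to the auxiliary inclusion, which closes a gap in the paper's case split. Your added properness observation (\(\emptyset\notin F_c\)) is also absent from the paper but follows immediately once every element of \(F_c\) contains the non-empty set \(U_c\).
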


    \begin{proof}
        We verify that \(F_c\) satisfies the conditions of non-emptiness, upward closure, and closure under intersections for all clopen sets \(\omega \in V\).
        \begin{enumerate}
            \item \textbf{Non-emptiness}: \(F_c\) is non-empty because it always contains the basic open set \(U_c\), which is guaranteed to be non-empty by \textbf{Lemma \ref{une}}.
            \item \textbf{Upward Closure}: \(F_c\) is upward closed if for every set \(\omega \in F_c\), any superset \(\omega' \in V\) of \(\omega\) (i.e., \(\omega \subseteq \omega'\)) also belongs to \(F_c\). \(F_c\) includes both the basic open set \(U_c\) and other sets determined by modal operations within the lattice of clopen sets \(V\).
            \begin{enumerate}
                \item \textbf{Case of Basic Open Sets}: Suppose \(\omega\in F_c\) such that \(U_c\subseteq\omega\). Let \(\omega'\in V\) be a clopen set such that \(\omega\subseteq\omega'\). By the definition of \(F_c\), since \(U_c\subseteq\omega\) and \(\omega\subseteq\omega'\), it follows by the transitivity of set inclusion that \(U_c\subseteq\omega'\). Thus, \(\omega'\) meets the criterion for inclusion in \(F_c\), as \(F_c\) includes all clopen sets in \(V\) that are supersets of \(U_c\).
                \item \textbf{Case of Modal Formulas}: Suppose \(\omega\in F_c\) due to a modal formula, i.e., \(\omega=\square^{B_i}_p\omega''\) for some \(\omega''\in V\). Let \(\omega'\in V\) be such that \(\omega\subseteq\omega'\). Since \(U_c\subseteq\omega\) by definition of the modal operator, and \(\omega'\) is a superset of \(\omega\), by the transitivity of set inclusion, \(U_c\subseteq\omega'\). Therefore, \(\omega'\) must belong to \(F_c\), as \(F_c\) includes all clopen sets in \(V\) that contain \(U_c\).
            \end{enumerate}
            \item \textbf{Closure under Intersections}: Consider two arbitrary clopen sets \(\omega\) and \(\omega'\) that are elements of \(F_c\). We aim to show that their intersection, \(\omega\cap\omega'\), also belongs to \(F_c\).
            \begin{itemize}
                \item \textbf{Case of Basic Open Sets}: Assume \(\omega,\omega'\in F_c\). By the definition of \(F_c\), we have \(U_c\subseteq\omega\) and \(U_c\subseteq\omega'\). Given that \(\omega\) and \(\omega'\) are clopen sets, their intersection \(\omega\cap\omega'\) is also a clopen set because the space \(V\) (comprised of clopen sets) is closed under finite intersections. Therefore, \(U_c\subseteq\omega\cap\omega'\), and \(\omega\cap\omega'\) meets the criterion for inclusion in \(F_c\).
                \item \textbf{Case of Modal Formulas}: Suppose \(\omega=\square^{B_i}_p\omega_0\) and \(\omega'=\square^{B_i}_p\omega_1\) are both contained in \(F_c\). This means that in all accessible contexts \(c'\) from \(c\), both \(\omega_0\) and \(\omega_1\) are satisfied, i.e., \(c'\in\omega_0\cap\omega_1\). Hence, \(\omega_0\cap\omega_1\in F_{c'}\). Therefore, \(\square^{B_i}_p(\omega_0\land\omega_1)\in F_c\). By (\textbf{Prop} \ref{landmod}), this is equivalent to \(\square^{B_i}_p\omega_0\land\square^{B_i}_p\omega_1\in F_c\).
            \end{itemize}
        \end{enumerate}
        These properties confirm that \(F_c\) meets all criteria of a filter on \(V\).
    \end{proof}

    \begin{lemma}
        \(F_c\) is an ultrafilter on \(V\).
    \end{lemma}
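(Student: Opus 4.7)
Because Lemma \ref{stone} shows that $(\mathcal{C}(\mathcal{L}),\mathcal{J})$ is a Stone space whose clopen algebra is $V$, classical Stone duality guarantees that for each point $c \in \mathcal{C}$ the set $U^\ast_c \coloneqq \{\omega \in V : c \in \omega\}$ is automatically an ultrafilter on $V$. The plan is therefore not to verify maximality of $F_c$ by hand, but to show that $F_c = U^\ast_c$; once this identification is in place, the ultrafilter property is inherited from the Stone correspondence and no Zorn-type extension step is required.

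\textbf{The two inclusions.} For $F_c \subseteq U^\ast_c$ I would split according to the two defining clauses of $F_c$. If $\omega$ lies in $F_c$ because $U_c \subseteq \omega$, then $c \in U_c$ (Lemma \ref{une}) forces $c \in \omega$ at once. If instead $\omega = \square^{B_i}_p \omega' \in M_{B_i,p}$ for some $p \in P$, I must argue that $c$ itself witnesses the modal formula, i.e.\ that every $R^{B_i}_p$-successor of $c$ belongs to $\omega'$. For the reverse inclusion $U^\ast_c \subseteq F_c$, take $\omega \in V$ with $c \in \omega$. The point-closedness corollary furnishes a basic open set $U_{c'}$ with $c \in U_{c'} \subseteq \omega$. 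Unpacking the definition of a basic open set, $c \in U_{c'}$ forces $c$ and $c'$ to share a basis and $Th(c) \subseteq Th(c')$; transitivity of $\subseteq$ on theories then yields $U_c \subseteq U_{c'} \subseteq \omega$, placing $\omega$ in the first clause of $F_c$.

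\textbf{Expected main obstacle.} The delicate part is the modal case of the first inclusion. The set $M_{B_i,p}$ is defined via an auxiliary context $c'' \in U_c$ in which $p$ is maximal in $Th(c'')$, together with the depth-$1$ accessibility chain $\mathcal{A}_1^{B_i}(c'')$, whereas satisfaction of $\square^{B_i}_p\omega'$ at $c$ must be checked directly against the $R^{B_i}_p$-successors of $c$. Reconciling these two descriptions will require matching the three clauses defining $R^{B_i}_p$ in the Quantum Frame definition against Definitions \ref{iac} and \ref{accchain}, confirming that the $c''$ appearing in $M_{B_i,p}$ is precisely the intermediary demanded by $R^{B_i}_p$, and that every context in $\mathcal{A}_1^{B_i}(c'')$ is an $R^{B_i}_p$-successor of $c$. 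Any slippage here either forces a tightening of the definition of $M_{B_i,p}$ or pushes the argument back to a Zorn-style extension of $F_c$ to an ultrafilter, sacrificing the clean bijective identification with $U^\ast_c$.
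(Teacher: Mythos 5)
Your strategy is sound and, in fact, supplies precisely the step the paper omits. The paper's own proof is a two-sentence assertion: since \(\mathcal{C}(\mathcal{L})\) is a Stone space (Lemma \ref{stone}), \(F_c\) is ``automatically a maximal filter.'' That does not follow from the Stone property alone --- being a filter on the clopen algebra of a Stone space does not make a filter maximal --- and the paper never identifies \(F_c\) with anything that Stone duality certifies as an ultrafilter. Your proposal closes exactly that gap by aiming to prove \(F_c=U^\ast_c=\{\omega\in V\mid c\in\omega\}\), the principal point-filter, which is an ultrafilter for elementary reasons (for every clopen \(\omega\), either \(c\in\omega\) or \(c\in\omega^c\)); this is the honest content behind the paper's appeal to ``properties of Stone spaces.'' Your reverse inclusion via point-closedness and transitivity of \(Th(\cdot)\subseteq Th(\cdot)\) is correct in outline (note that the paper's point-closedness corollary actually produces a basic open set \emph{containing} the finite intersection rather than contained in it, so you should apply your transitivity argument to each \(U_{c_{k'j}}\) in the intersection separately and then intersect). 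The obstacle you flag --- reconciling membership of \(\square^{B_i}_p\omega'\) in \(M_{B_i,p}\), which is phrased through the auxiliary context \(c''\) and \(\mathcal{A}_1^{B_i}(c'')\), with satisfaction of \(\square^{B_i}_p\omega'\) at \(c\) via the three clauses of \(R^{B_i}_p\) --- is genuinely the crux, and the paper does not resolve it either; as written, the existential quantifier over \(c''\) in the definition of \(R^{B_i}_p\) leaves room for the slippage you describe, so either that definition or \(M_{B_i,p}\) needs tightening before the identification \(F_c=U^\ast_c\) goes through. In short: same high-level idea as the paper, but your route is the one that would actually constitute a proof.
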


    \begin{proof}
        Since \(C(\mathcal{L})\) is a Stone space (\textbf{Lemma \ref{stone}}), \(F_c\) is automatically a maximal filter, which is by definition an ultrafilter. The maximality of the filter \(F_c\) follows directly from the properties of Stone spaces.
    \end{proof}
    
    Since \(F_c\) was constructed for an arbitrary context \(c\), we conclude that the mapping \(\Theta\) is well-defined, associating to any applicable context \(c\in\mathcal{C}\) in a quantum frame \(\mathcal{F}(\mathcal{L})\) an ultrafilter on \(V\).
    \\
    \begin{aside}
    \textbf{Aside:} We examine two edge cases to illustrate and verify that \(F_c\) is an ultrafilter in the context set \(c=(B,P)\). These cases are defined by minimal and maximal logical content in \(P\).
    \begin{itemize}
        \item \textbf{Case One: Minimal Logical Content in \(P\)}:
        Assume \(P\) is empty, indicating no propositions are considered true a priori in \(c\). In this scenario, \(Th(c)\) includes all tautologically true propositions in \(B\). Thus, \(U_c\) includes all contexts \(c'=(B',P')\) where \(B'=B\) and \(P'\) imposes no additional constraints beyond those universally true in \(B\). Therefore, \(F_c\) begins with a broad basis, containing all sets \(\omega\in V\) where \(U_c \subseteq\omega\). Since \(U_c\) is extensive, this condition implies \(F_c\) starts broadly but remains well-defined. Next, we verify the ultrafilter properties for \(F_c\).
        \begin{itemize}
            \item \textit{Non-emptiness}: \(F_c\) contains \(U_c\), thus it is non-empty.
            \item \textit{Upward Closure}: Any \(\omega\in V\) containing \(U_c\) is in \(F_c\), and any supersets of such \(\omega\) are also included.
            \item \textit{Closure under Intersections}: If \(\omega,\omega'\in F_c\), both containing \(U_c\), then \(\omega\cap\omega'\) also contains \(U_c\) and hence belongs to \(F_c\).
            \item \textit{Maximality}: For any \(\omega\in V\), because \(U_c\) is broad and inclusive, if \(\omega\notin F_c\), this implies \(\omega\) does not cover all possible valuations permitted by the empty \(P\); thus, its complement, \(\neg\omega\), which will cover these, must be in \(F_c\).
        \end{itemize}
        \item \textbf{Case Two: Maximal Logical Content in \(P\)}:
        Assume \(P\) for a context \(c=(B,P)\) is maximally restrictive, possibly containing contradictory or comprehensive sets of propositions within \(B\). With \(P\) maximally filled, \(Th(c)\) potentially includes a highly restrictive set of propositions, narrowing down \(U_c\) to very few contexts or even just \(c\) itself if \(P\) determines a unique valuation. Starting from possibly the singleton set \(U_c\), \(F_c\) includes all supersets of a potentially minimal \(U_c\), focusing the filter significantly. Next, we verify the ultrafilter properties for \(F_c\). \textit{Non-emptiness}, \textit{upward closure}, and \textit{closure under intersections} are maintained as before. For \textit{maximality}, given the minimal nature of \(U_c\), any \(\omega\in V\) either directly aligns with the stringent criteria set by \(P\) or it doesn't. If it doesn't, \(\neg\omega\), encompassing the necessary criteria, must be in \(F_c\).
    \end{itemize} 
    In both edge cases, \(F_c\) preserves its ultrafilter properties.
   \end{aside}\textbf{}
   \subsection{Bijectivity of \(\Theta\)}
   We now prove that \(\Theta:\mathcal{C}\rightarrow F\) is both injective and surjective.
   
   \begin{cor}
       \(\Theta\) is injective.
   \end{cor}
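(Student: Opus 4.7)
The plan is to reduce injectivity of $\Theta$ to Hausdorff separation in the Stone space $\mathcal{C}(\mathcal{L})$, by first identifying each filter $F_c$ with the principal ultrafilter at the point $c$, namely $\{\omega \in V \mid c \in \omega\}$.

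For the identification, both pieces in the union defining $F_c$ land in the principal ultrafilter: if $U_c \subseteq \omega$ then $c \in U_c \subseteq \omega$, and each $\square^{B_i}_p \omega \in M_{B_i,p}$ satisfies the semantic condition $c \in \square^{B_i}_p \omega$ directly from its defining quantifier over accessible contexts (reading the $c''$ in the definition as the reference context $c$, consistent with the surrounding aside and the modal operator of the quantum frame). Conversely, any clopen $\omega$ containing $c$ must, by the zero-dimensionality corollary above, contain some basic open set $U_{c'}$ through $c$; the relation $c \in U_{c'}$ forces $B_c = B_{c'}$ together with $Th(c) \subseteq Th(c')$, whence $U_c \subseteq U_{c'} \subseteq \omega$, placing $\omega$ in the first piece of $F_c$.

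With this identification in hand, injectivity is immediate. Suppose $c_1 \neq c_2$; the Hausdorffness of $\mathcal{C}(\mathcal{L})$ granted by Lemma \ref{stone} supplies a clopen $\omega \in V$ separating them, so $\omega \in F_{c_1} \setminus F_{c_2}$. Concretely: if the bases differ, the basic open set $U_{c_1}$ already separates them since it only contains contexts with basis $B_1$; if the bases coincide but the theories differ, say $Th(c_1) \not\subseteq Th(c_2)$, then $c_1 \notin U_{c_2}$ while $c_2 \in U_{c_2}$, and $U_{c_2}$ separates the two in the opposite direction.

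The main obstacle is the first step, specifically in unpacking $M_{B_i,p}$: one must confirm that every modal formula asserted to lie in $F_c$ does in fact contain $c$ as a point, so that the modal contribution does not smuggle extra clopen sets into $F_c$ beyond the principal ultrafilter at $c$. This amounts to reading the accessibility-chain condition in $M_{B_i,p}$ as the semantic unfolding of $c \in \square^{B_i}_p \omega$ under the modal operator on the quantum frame. Once this bookkeeping is settled, injectivity of $\Theta$ collapses to the standard fact that in a Hausdorff space the assignment of points to principal ultrafilters is injective.
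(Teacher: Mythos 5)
Your argument is correct in substance and reaches the same witnesses as the paper, but it is organized around a different structural observation. The paper proves injectivity by a direct case analysis: for different bases it invokes disjointness of \(U_{c_1}\) and \(U_{c_2}\) (a filter cannot contain two disjoint sets), and for a shared basis it constructs a distinguishing clopen set \(\omega_p\) from a proposition \(p\in P_1\setminus P_2\), claiming \(U_{c_1}\subseteq\omega_p\) but \(U_{c_2}\not\subseteq\omega_p\). You instead first identify \(F_c\) with the principal ultrafilter \(\{\omega\in V\mid c\in\omega\}\) and then reduce injectivity to Hausdorff separation in the Stone space; your concrete separators (\(U_{c_1}\) for distinct bases, \(U_{c_2}\) when \(Th(c_1)\not\subseteq Th(c_2)\)) are essentially the paper's, but your route buys two things. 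First, the principal-ultrafilter identification is exactly the content needed later for surjectivity and for the preservation results, so making it explicit is genuinely useful; second, your same-basis separator avoids a weak point in the paper's Subcase 2.1, where the claim \(U_{c_1}\subseteq\omega_p\) is dubious (a context \(c'=(B,\emptyset)\) lies in \(U_{c_1}\) without entailing \(p\)). You are also right to flag the \(M_{B_i,p}\) bookkeeping as the real obstacle: the paper's definition of \(M_{B_i,p}\) quantifies over an unspecified \(c''\), and the paper's own injectivity proof silently ignores the modal contributions to \(F_c\); your reading (that \(M_{B_i,p}\) consists of the modal formulas semantically true at \(c\)) is the only one under which either proof closes. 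One caveat you share with the paper: neither argument separates two contexts \((B,P_1)\neq(B,P_2)\) with \(Th(c_1)=Th(c_2)\), for which \(F_{c_1}=F_{c_2}\); this is really a defect of the Hausdorff claim in Lemma \ref{stone} (contexts would need to be identified up to logical equivalence), so your appeal to that lemma inherits rather than introduces the gap.
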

   
   \begin{subproof}
        Injectivity follows from the fact that each context \(c\in\mathcal{C}\) is uniquely associated with a distinct ultrafilter \(F_c\) in \(F\). Given the zero-dimensionality of the Stone space, different contexts correspond to disjoint clopen sets, ensuring that the ultrafilters are distinct. We will, however, proceed with a more detailed proof by contradiction.
        \\\\Assume, for contradiction, that there exist two distinct contexts \(c_1=(B_1,P_1)\) and \(c_2=(B_2,P_2)\) in \(\mathcal{C}\) such that \(c_1\neq c_2\), but \(\Theta(c_1)=\Theta(c_2)\). This would imply that the ultrafilter \(F_{c_1}\) associated with \(c_1\) is equal to the ultrafilter \(F_{c_2}\) associated with \(c_2\).
        \\\\To demonstrate a contradiction, we consider two cases: either the contexts \(c_1\) and \(c_2\) have different bases, or they share the same basis but differ in their set of propositions.
   
       \begin{itemize}
           \item \textbf{Case One (Different Bases)}: Suppose \(B_1\neq B_2\). The basic open sets \(U_{c_1}\) and \(U_{c_2}\) corresponding to \(c_1\) and \(c_2\) are distinct and disjoint because basic open sets corresponding to contexts from different bases are disjoint by construction. This disjointness reflects the orthomodular structure of the lattice, where non-commuting sets of propositions cannot coexist in the same context. Since ultrafilters are maximal filters that do not contain disjoint sets, \(F_{c_1}\) and \(F_{c_2}\) cannot be equal, implying that \(\Theta(c_1) \neq \Theta(c_2)\).

           \item \textbf{Case Two (Shared Basis)}: Suppose \(B_1=B_2=B\) and \(P_1\neq P_2\). We consider two subcases:
           \begin{itemize}
               \item \textbf{Subcase 2.1: \(P_1\not\subseteq P_2\) and \(P_2 \not\subseteq P_1\)}: Without loss of generality, assume there exists a proposition \(p\in P_1\) such that \(p\notin P_2\). Define \(\omega_p=\{c \in \mathcal{C} \mid p \in Th(c)\}\), the set of all contexts where \(p\) is a logical consequence. By construction, \(\omega_p\in V\) because \(V\) is closed under Boolean operations, and \(\omega_p\) represents a valid operation in the Boolean algebra of the lattice. Since \(U_{c_1}\subseteq\omega_p\), it follows that \(\omega_p\in F_{c_1}\). However, since \(p\notin P_2\), \(U_{c_2}\not\subseteq\omega_p\), which implies \(\omega_p\notin F_{c_2}\). Thus, \(F_{c_1}\neq F_{c_2}\), leading to the conclusion that \(\Theta(c_1)\neq\Theta(c_2)\).

               \item \textbf{Subcase 2.2: \(Th(c_1)\subseteq Th(c_2)\) or \(Th(c_2)\subseteq Th(c_1)\)}: Without loss of generality, assume \(Th(c_1)\subseteq Th(c_2)\). This implies \(Th(c_1)\subseteq\Theta(c_1)\) and \(Th(c_1)\subseteq\Theta(c_2)\). Since the theories of \(c_1\) and \(c_2\) are related by subset inclusion, the corresponding ultrafilters \(F_{c_1}\) and \(F_{c_2}\) cannot be equal unless \(P_1=P_2\), as ultrafilters are maximal and cannot properly contain one another.
           \end{itemize}
       \end{itemize}
       
       Therefore, in both cases, we have shown that if \(c_1\neq c_2\), then \(\Theta(c_1)\neq\Theta(c_2)\). Thus, \(\Theta\) is injective. This completes the proof for the injectivity of \(\Theta\).
   \end{subproof}

   Next, we prove that \(\Theta\) is surjective by demonstrating that for every ultrafilter \(f\in F\), there exists a context \(c_f\in \mathcal{C}\) such that \(\Theta(c_f)=f\).
   
   \begin{cor}
       \(\Theta\) is surjective.
   \end{cor}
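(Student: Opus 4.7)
The plan is to invoke Stone's Representation Theorem directly, leveraging Lemma \ref{stone} which established that \((\mathcal{C}(\mathcal{L}), \mathcal{J})\) is a Stone space. Stone duality guarantees a canonical bijective correspondence between points of the Stone space \(\mathcal{C}(\mathcal{L})\) and ultrafilters on its Boolean algebra \(V\) of clopen sets. Given any ultrafilter \(f \in F\), this immediately produces a unique candidate context \(c_f \in \mathcal{C}\) satisfying \(f = \{\omega \in V : c_f \in \omega\}\), so the substantive task reduces to verifying that the filter \(F_{c_f}\) assembled in the previous construction coincides with \(f\).

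Concretely, after extracting \(c_f\) from Stone duality, I would verify the inclusion \(F_{c_f} \subseteq f\) by case analysis on the two components of \(F_{c_f}\). For any \(\omega \in V\) with \(U_{c_f} \subseteq \omega\), Lemma \ref{une} gives \(c_f \in U_{c_f} \subseteq \omega\), hence \(\omega \in f\). For any modal formula \(\square^{B_i}_p \omega \in M_{B_i,p}\), the defining condition of the modal operator combined with the membership of \(c_f\) in the relevant accessibility chain \(\mathcal{A}_1^{B_i}\) ensures \(c_f \in \square^{B_i}_p \omega\), placing it in \(f\) as well. Once this inclusion is established, I would invoke the preceding lemma stating that \(F_{c_f}\) is itself an ultrafilter on \(V\). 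Since no ultrafilter can be properly contained in another, the inclusion \(F_{c_f} \subseteq f\) forces equality, yielding \(\Theta(c_f) = F_{c_f} = f\) and hence surjectivity.

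The main obstacle will be rigorously handling the modal component \(\bigcup_{B_i}\bigcup_{p \in P_f} M_{B_i,p}\) of \(F_{c_f}\), since Stone duality produces \(c_f\) abstractly as a point without directly specifying how its proposition set \(P_f\) interacts with the accessibility relations \(R^{B_i}_p\). In the degenerate cases flagged by the aside following the definition of \(F_c\) (minimal or maximal \(P_f\)), one may need to extract \(P_f\) explicitly, for instance by setting \(P_f\) to generate \(\bigcap\{\mathrm{Th}(c') : U_{c'} \in f\}\) within the appropriate basis \(B_f\), rather than relying purely on the abstract Stone-duality witness. Securing this alignment between the concrete algebraic description of \(c_f\) and the modal closure used to define \(F_{c_f}\) is where the argument is most delicate; once it is in place, the maximality-based closing step should complete the proof without further complication.
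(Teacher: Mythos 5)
Your proposal is correct in outline and reaches the same final step as the paper (the inclusion \(F_{c_f}\subseteq f\) between two ultrafilters forces equality), but it gets to the witness \(c_f\) by a genuinely different route. The paper constructs \(c_f=(B_f,P_f)\) syntactically from the contents of \(f\): it first argues that all non-modal formulae in \(f\) live in a single maximal Boolean sublattice \(B_f\), then assembles \(P_f\) from the non-modal clopen sets in \(f\) together with propositions pulled back from modal formulae via unitary automorphisms \(\Phi_{B_i\to B_f}\), and finally proves \(F_{c_f}\subseteq f\) by induction on the structure of quantum formulae (negation, conjunction, modal operators). You instead obtain \(c_f\) semantically, as the unique point of the Stone space \((\mathcal{C}(\mathcal{L}),\mathcal{J})\) (Lemma \ref{stone}) whose principal clopen ultrafilter is \(f\) --- which is legitimate, since for a compact zero-dimensional Hausdorff space every ultrafilter of the clopen algebra is the trace of a unique point --- and then organize the verification of \(F_{c_f}\subseteq f\) by the two clauses in the definition of \(F_c\) rather than by structural induction. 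Your route is shorter and avoids the paper's somewhat delicate ``canonical basis'' argument, at the cost of making the identification of \(\Theta\) with the canonical point-to-ultrafilter map the entire burden of the proof; you correctly flag that the modal component \(\bigcup_{B_i}\bigcup_{p}M_{B_i,p}\) is where this identification is most fragile, and indeed that is also the weakest step of the paper's own inductive argument. The one point you should make fully explicit is that \(\square^{B_i}_p\omega\) is itself a clopen set in \(V\) (the paper assumes \(V\) is closed under the modal operators), since otherwise the claim \(c_f\in\square^{B_i}_p\omega\implies\square^{B_i}_p\omega\in f\) does not follow from \(f\) being an ultrafilter on the clopen algebra.
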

   
   \begin{subproof}
       Surjectivity follows from Stone's Representation Theorem, which guarantees that every ultrafilter in \(F\) corresponds to some context \(c\in\mathcal{C}\). Specifically, for any ultrafilter \(f\in F\), there exists a unique context \(c_f\) such that \(F_{c_f}=f\). This is a direct consequence of the duality between Boolean algebras and Stone spaces. To demonstrate this, consider the following chain of arguments:
       \\\\First, we construct a context \(c_f\) corresponding to an arbitrary ultrafilter \(f\in F\), such that \(\Theta(c_f)=f\).

       \begin{enumerate}
           \item \textbf{Canonical Basis \(B_f\)}:
           First, identify a unique basis \(B_f\) for the context \(c_f\) such that \(\Theta(c_f)=f\). Consider the formulae within \(f\) that do not contain modal operators. These non-modal formulae correspond directly to propositions in a maximal Boolean sublattice of the orthomodular lattice \(\mathcal{L}\).

           We argue that all non-modal formulae within \(f\) must come from a single maximal Boolean sublattice \(B_f\). Suppose, for contradiction, that \(f\) contains non-modal formulae corresponding to propositions from two distinct maximal Boolean sublattices \(B_1\) and \(B_2\). This would imply the existence of basic open sets \(U_{c_1}\) and \(U_{c_2}\), corresponding to contexts \(c_1=(B_1,P_1)\) and \(c_2=(B_2,P_2)\), such that both \(U_{c_1}\in f\) and \(U_{c_2}\in f\).

           Consider two scenarios:
           \begin{itemize}
               \item If \(B_1\) and \(B_2\) correspond to mutually exclusive sublattices of \(\mathcal{L}\), their corresponding basic open sets \(U_{c_1}\) and \(U_{c_2}\) are disjoint. Therefore, \(U_{c_1}\cap U_{c_2}=\emptyset\), contradicting the fact that \(f\) is an ultrafilter, as ultrafilters cannot contain the empty set.
               \item If \(B_1\) and \(B_2\) have overlapping elements, then \(U_{c_1}\) and \(U_{c_2}\) might intersect non-trivially. However, this intersection would correspond to a context belonging to a common sublattice \(B_f\) that encapsulates this intersection. This still leads to a contradiction, as the non-modal formulae would need to be consistent within a single maximal Boolean sublattice \(B_f\).
           \end{itemize}

           Therefore, all non-modal formulae within \(f\) must correspond to propositions within a single maximal Boolean sublattice, which we identify as \(B_f\).

           \item \textbf{Construction of \(P_f\)}:
           Next, construct the set of propositions \(P_f\) for the context \(c_f=(B_f,P_f)\), where \(B_f\) is the previously identified canonical basis for \(f\).

           \begin{enumerate}
               \item \textbf{Direct Inclusion of Propositions}:
               Initialize \(P_f\) by including all propositions corresponding to non-modal clopen sets contained in \(f\):
               \[
               P_f=\{p\in B_f\mid\omega_p\in f,\text{ where }\omega_p\text{ is the clopen set corresponding to }p\}.
               \]
               \item \textbf{Inclusion from Modal Formulas}:
               Expand \(P_f\) by including propositions implied by modal formulas present in \(f\). For each modal formula \(\square^{B_i}_p\omega\in f\), include in \(P_f\) the propositions mapped from other bases \(B_i\) to \(B_f\) via the logical implications of these modal operators. Specifically, apply a unitary automorphism \(\Phi_{B_i\to B_f}\) to translate these propositions into the context of \(B_f\):
               \[
               P_f=P_f\cup\{\Phi_{B_i\to B_f}(p)\mid\square^{B_i}_p\omega\in f,\omega\text{ corresponds to }p\in B_i\}.
               \]
           \end{enumerate}
       \end{enumerate}

       Finally, verify that \(\Theta(c_f)=f\). We need to show that for every clopen set \(\omega\in V\), \(\omega\in\Theta(c_f)\) if and only if \(\omega\in f\). Since both \(F_{c_f}\) and \(f\) are ultrafilters, it suffices to prove one direction: \(F_{c_f}\subseteq f\). By showing this inclusion, we conclude that \(F_{c_f}=f\) because no ultrafilter can properly contain another.

       \begin{itemize}
           \item \textbf{Base Case (Non-Modal Formulae)}:
           For any non-modal formula \(\omega\in V\), if \(\omega\in F_{c_f}\), then by the construction of \(P_f\), the corresponding proposition \(p\in B_f\) is in \(P_f\). Since \(P_f\) was constructed to include all propositions corresponding to non-modal clopen sets in \(f\), it follows that \(\omega\in f\).

           \item \textbf{Inductive Step: Modal and Compound Formulae}
           \begin{itemize}
               \item \textbf{Negation}:
               Suppose \(\omega\) is a quantum formula such that \(\omega\notin F_{c_f}\). By the ultrafilter property, \(\neg\omega\in F_{c_f}\). Since \(\omega\notin F_{c_f}\), \(\omega\notin f\) by construction, hence \(\neg\omega\in f\) by the ultrafilter property.

               \item \textbf{Conjunction and Disjunction}:
               If \(\phi\) and \(\psi\) are quantum formulae such that \(\phi, \psi\in F_{c_f}\), then \(\phi\land\psi\in F_{c_f}\) by closure under intersections of \(F_{c_f}\). Since \(\phi,\psi\in F_{c_f}\), we know from the base case that \(\phi,\psi\in f\). Thus, \(\phi\land\psi\in f\) by closure under intersections of \(f\).

               \item \textbf{Modal Operators}:
               Suppose \(\omega=\square^{B_i}_p\omega_0\in F_{c_f}\), for some basis \(B_i\), proposition \(p\in c_f\), and quantum formula \(\omega_0\). From the definition of the constructed ultrafilter \(F_{c_f}\), the inclusion of \(\omega\) in \(F_{c_f}\) implies that for all contexts \(c'\) accessible from \(c_f\) via the relation \(R^{B_i}_p\), \(c'\in \omega_0\). This implies, by the nature of the modal operator and the definition of accessibility, that \(\Phi_{B_i\to B_f}(\omega_0)\land p\in c_f\), where \(\Phi_{B_i\to B_f}\) maps the contexts appropriately under the unitary transformation between bases. Define \(p'\equiv\Phi_{B_i\to B_f}(\omega_0)\land p\). Since \(p'\land p\) represents a logical consequence within the common basis \(B_f\) and is derived from the application of modal operators mapped correctly into the basis \(B_f\), the associated clopen set \(\omega_{p'\land p}\) must also be in \(f\) due to the base case. Hence, since \(f\) is closed under supersets and \(p' \land p \subseteq \square^{B_i}_p\omega_0\), it necessarily follows that \(\square^{B_i}_p\omega_0=\omega\in f\).
           \end{itemize}
       \end{itemize}

       By induction, we have shown that for any quantum formula \(\omega\in V\), if \(\omega\in F_{c_f}\), then \(\omega\in f\). Therefore, \(F_{c_f}\subseteq f\). Since both \(F_{c_f}\) and \(f\) are ultrafilters on \(V\), this implies that \(F_{c_f}=f\) since no ultrafilter can properly contain another. Thus, \(\Theta(c_f)=f\).

       This completes the proof of surjectivity, demonstrating that for every ultrafilter \(f\in F\), there exists a context \(c_f\in mathcal{C}\) such that \(\Theta(c_f)=f\).
   \end{subproof}

    \begin{lemma}
        \(\Theta\) is bijective.
    \end{lemma}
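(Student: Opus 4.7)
The plan is to obtain bijectivity as an immediate consequence of the two corollaries that have just been established. Since a function between sets is bijective precisely when it is both injective and surjective, and the previous two results prove each of these properties in turn for the mapping $\Theta:\mathcal{C}\to F$, the lemma reduces to a one-line appeal to those facts.

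Concretely, I would first invoke the injectivity corollary, which showed by a case analysis on whether two distinct contexts share a common basis that $c_1\neq c_2$ forces $F_{c_1}\neq F_{c_2}$, using the disjointness of basic open sets across different maximal Boolean sublattices and the discriminating clopen set $\omega_p$ in the shared-basis case. Then I would invoke the surjectivity corollary, which constructed, for every ultrafilter $f\in F$, a canonical context $c_f=(B_f,P_f)$, with $B_f$ determined by the non-modal clopen sets in $f$ and $P_f$ built by direct inclusion plus translation of modal formulae via the unitary automorphisms $\Phi_{B_i\to B_f}$, and then verified by structural induction on quantum formulae that $F_{c_f}=f$. Combining the two yields that $\Theta$ is a bijection of $\mathcal{C}$ onto $F$.

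There is no genuine obstacle at this stage, since the substantive work has already been absorbed into the two corollaries; the hard parts were the case analysis for injectivity and, more delicately, the identification of the canonical basis $B_f$ in the surjectivity argument (which relied on ultrafilters not containing disjoint basic open sets, a feature inherited from the Stone space structure of $(\mathcal{C}(\mathcal{L}),\mathcal{J})$ established in Lemma \ref{stone}). If any remark is warranted in the proof body, it would be to emphasize that the bijection is well-posed precisely because both maps $c\mapsto F_c$ and $f\mapsto c_f$ land in the correct sets and are inverse to one another, a fact witnessed by the verification $F_{c_f}=f$ in the surjectivity proof together with the uniqueness of $c_f$ guaranteed by injectivity.
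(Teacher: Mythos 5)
Your proposal is correct and matches the paper's own argument, which likewise disposes of the lemma in one line by combining the injectivity and surjectivity corollaries. The additional recapitulation of how those corollaries were proved is accurate but not needed at this stage.
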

    \begin{subproof}
        By the previous two corollaries, we have that \(\Theta\) is both injective and surjective. Hence, it is bijective.
    \end{subproof}
   \subsection{Preservation of Logical Relations}
We have now validated that the mapping of contexts to ultrafilters is well-defined, injective, and surjective. To establish \(\Theta\) as a complete isomorphism, we proceed by verifying the preservation of the two logical relations, accessibility relations and valuations. We begin by demonstrating that \(\Theta\) preserves the accessibility relations defined on \(\mathcal{F}(\mathcal{L})\).

\begin{cor}
    \(\Theta\) preserves accessibility relations.
\end{cor}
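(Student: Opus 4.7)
The goal is to show that for any two contexts $c,c'\in\mathcal{C}$ and any basis $B_i$ with proposition $p$, we have $(c,c')\in R^{B_i}_p$ in $\mathcal{F}(\mathcal{L})$ if and only if $(\Theta(c),\Theta(c'))\in R'^{B_i}_p$ in $\mathcal{F}(\mathcal{L})^{++}$. Unfolding the definition of the dual-frame relation, the right-hand condition is equivalent to: for every $\omega\in V$, if $\square^{B_i}_p\omega\in F_c$, then $\omega\in F_{c'}$. The plan is to treat the two implications separately, leaning on the construction of the filter $F_c$ for the easy direction and on a constructed separating clopen set for the harder direction.

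For the forward direction, I would assume $(c,c')\in R^{B_i}_p$ and take an arbitrary $\omega\in V$ with $\square^{B_i}_p\omega\in F_c$. The key observation is that membership of $\square^{B_i}_p\omega$ in $F_c$ forces $c\in\square^{B_i}_p\omega$: if $U_c\subseteq\square^{B_i}_p\omega$, then $c\in U_c$ by Lemma \ref{une} gives it directly, and if the membership comes instead from the modal component $M_{B_i,p}$ in the construction of $F_c$, this is by design built so that $c$ satisfies the corresponding necessity. Unpacking the semantic clause $\square^{B_i}_p\omega=\{c\in C\mid\forall(c,c'')\in R^{B_i}_p,\,c''\in\omega\}$ then yields $c'\in\omega$. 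Because $\omega$ is clopen and $F_{c'}$ is an ultrafilter (by the earlier lemma), either $\omega\in F_{c'}$ or $\neg\omega\in F_{c'}$; the latter would force $U_{c'}\subseteq\neg\omega$ and hence $c'\notin\omega$, a contradiction. So $\omega\in F_{c'}$, as required.

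For the backward direction I would argue by contrapositive: assuming $(c,c')\notin R^{B_i}_p$, I will exhibit a clopen $\omega\in V$ such that $\square^{B_i}_p\omega\in F_c$ but $\omega\notin F_{c'}$. The natural witness is $\omega:=\{c''\in\mathcal{C}\mid(c,c'')\in R^{B_i}_p\}$, which (since $\mathcal{L}$ is finite and the topology on $\mathcal{C}(\mathcal{L})$ is zero-dimensional, so every such subset is a finite Boolean combination of basic opens) lies in $V$. By construction $c\in\square^{B_i}_p\omega$, so the same ultrafilter argument used above shows $\square^{B_i}_p\omega\in F_c$; on the other hand $c'\notin\omega$ by assumption, so by the ultrafilter property $\neg\omega\in F_{c'}$ and $\omega\notin F_{c'}$.

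The main obstacle I expect is the forward direction's reliance on the precise interaction between the modal summand $M_{B_i,p}$ of $F_c$ and the semantic clause defining $\square^{B_i}_p$. The paper's definition of $M_{B_i,p}$ mixes quantification over the basis-dependent accessibility chain $\mathcal{A}^{B_i}_1$ with the relation $R^{B_i}_p$, and care is needed to verify that every modal formula placed in $F_c$ by fiat actually corresponds to a clopen set that contains $c$ semantically — i.e., that the two ways $\square^{B_i}_p\omega$ can enter $F_c$ both imply $c\in\square^{B_i}_p\omega$ under the relation used for the dual frame. Once this compatibility is checked, the rest of the argument is essentially the standard Stone-duality preservation-of-relations calculation adapted to the polymodal setting.
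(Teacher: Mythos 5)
Your forward direction matches the paper's in substance: both reduce to the observation that \(\square^{B_i}_p\omega\in F_c\) forces \(c\) to satisfy the necessity semantically, whence every \(R^{B_i}_p\)-successor \(c'\) lies in \(\omega\) and so \(\omega\in F_{c'}\); the paper merely phrases this as a contradiction while you argue it directly. Your backward direction, however, is genuinely different and is the standard J\'onsson--Tarski/Stone-duality move: you take the canonical witness \(\omega=\{c''\mid(c,c'')\in R^{B_i}_p\}\), note it is clopen (legitimate here, since a finite Hausdorff space is discrete, so every subset lies in \(V\)), and separate \(F_c\) from \(F_{c'}\) with \(\square^{B_i}_p\omega\). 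The paper instead argues by contradiction with a case split on whether \(c'\) shares the basis \(B_i\), in one branch concluding \(\Theta(c)\subseteq\Theta(c')\) hence \(c=c'\) and in the other reasoning through \(F^{\max}_{c''}\); your single uniform witness is cleaner, more general, and avoids the paper's reliance on those case-specific claims. The one load-bearing step common to both arguments is the equivalence between semantic membership \(c\in\square^{B_i}_p\omega\) and filter membership \(\square^{B_i}_p\omega\in F_c\): because the paper defines \(F_c\) as a union of \(\{\omega\mid U_c\subseteq\omega\}\) with the modal summands \(M_{B_i,p}\) rather than as the principal ultrafilter \(\{\omega\in V\mid c\in\omega\}\), this equivalence is not automatic (e.g.\ your ultrafilter dichotomy assumes that any non-modal member of \(F_c\) must contain \(U_c\)). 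You correctly flag this as the main obstacle; it is a gap in the paper's own construction as much as in your proposal, and closing it (say, by showing \(F_c\) coincides with the principal ultrafilter at \(c\)) would make your version of the corollary fully rigorous.
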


\begin{subproof}
    This property can be formalized as \((c, c') \in R^{B_i}_p \Leftrightarrow (\Theta(c), \Theta(c')) \in {{R^{B_i}_p}^{+}}_+\), where \(R^{B_i}_p\) and \({{R^{B_i}_p}^{+}}_+\) are the accessibility relations on \(\mathcal{F}(\mathcal{L})\) and its double-dual, respectively. Recall that the accessibility relations on ultrafilters in the double-dual are defined as \((\omega,\omega')\in{{R^{B_i}_p}^{+}}_+\) iff \(\forall U\in V(\square^{B_i}_p U\in f\Rightarrow U\in f')\).

    \begin{itemize}
        \item \(\mathbf{\Rightarrow}\): We proceed with a proof by contradiction. Assume there exists a pair of contexts \(c,c'\in\mathcal{C}\) and a relation \(R^{B_i}_p\) such that \((c,c')\in R^{B_i}_p\) in the quantum frame, but \((\Theta(c), \Theta(c')) \notin {{R^{B_i}_p}^{+}}_+\) in the dual frame. This would imply that there exists some formula \(\omega'\in V\) such that \(\square^{B_i}_p\omega'\in\Theta(c)\) but \(\omega'\notin\Theta(c')\). In the quantum frame, this is equivalent to \(U_c\in\square^{B_i}_p\omega'\) but \(U_{c'}\notin\omega'\). However, by definition of the accessibility relation on \(\mathcal{F}(\mathcal{L})\),\(U_{c'}\in\omega'\), which leads to a contradiction. Thus, our initial assumption was false, and we must have \((c,c')\in R^{B_i}_p\Rightarrow(\Theta(c),\Theta(c'))\in {{R^{B_i}_p}^{+}}_+\).

        \item \(\mathbf{\Leftarrow}\): Again, we use proof by contradiction. Assume that \((\Theta(c),\Theta(c'))\in{{R^{B_i}_p}^{+}}_+\) but \((c,c')\notin R^{B_i}_p\). The latter condition implies that \(c'\notin\mathcal{A}^i_1\). However, the first condition implies that \(c'\) is in the basis \(B_i\), leading to two possibilities:
        \begin{enumerate}
            \item \(\mathbf{B=B_i}\): Since \((\Theta(c),\Theta(c'))\in R^{B_i}_p\) and \(c\) is assumed to be in the same basis \(B_i\), defined by the modal operator \(\square^{B_i}_p\) associated with \(R^{B_i}_p\), we have \(U_c\in\omega\Leftrightarrow U_c\in\square^{B_i}_p\omega\) for all non-modal formulae \(\omega\). Substituting this equivalence into the definition of accessibility in the double-dual, we get that \((\Theta(c),\Theta(c'))\in R^{B_i}_p\Leftrightarrow\forall\omega\in V(\omega\in\Theta(c)\Rightarrow\omega\in\Theta(c'))\). Consequently, \(\Theta(c)\subseteq\Theta(c')\). However, since \(\Theta(c)\) and \(\Theta(c')\) are ultrafilters on \(V\), they are upward-closed. Given that an ultrafilter cannot be properly contained by any other filter, the only possibility is that \(\Theta(c)=\Theta(c')\). By the injectivity of \(\Theta\), these contexts must be equivalent. Since \(\mathcal{A}_n^B\) iteratively includes all previous sets of accessible contexts, we conclude that any context is accessible to itself, thus \((c,c')\in R^{B_i}_p\), and our initial assumption was false.

            \item \(\mathbf{B\neq B_i}\): Assuming \((c,c')\notin R^{B_i}_p\), we have \(c'\notin F_{c''}^{max}\), meaning there exists some proposition in \(c'\) that is not compatible with \(c''\), where \(c''=(B,P)\). If \(c\in\square^{B_i}_p\omega\), then all contexts in \(F_{c''}^{max}\) in the basis \(B_i\) satisfy \(\omega\). In other words, for all \(c_1,c_2,c_3,\cdots\in F_{c''}^{max}\), \(\omega\in (Th(c_1)\cap Th(c_2)\cap Th(c_3)\cap\cdots)\). Since \((\Theta(c),\Theta(c'))\in{{R^{B_i}_p}^{+}}_+\) implies \(\square^{B_i}_p\omega\Rightarrow\omega\in\Theta(c')\), it must be the case that \((Th(c_1)\cap Th(c_2)\cap Th(c_3)\cap\cdots)\subseteq Th(c')\), which in turn implies that \(c'\) is indeed in \(F_{c''}^{max}\), and thus \((c,c')\in R^{B_i}_p\).
        \end{enumerate}
    \end{itemize}
    
    This concludes the proof of the preservation of accessibility relations, establishing that \((c,c')\in R^{B_i}_p\) if and only if \((\Theta(c),\Theta(c'))\in{{R^{B_i}_p}^{+}}_+\).
\end{subproof}

    Finally, we demonstrate the preservation of valuations. 
\begin{cor}
    \(\Theta\) preserves valuations.
\end{cor}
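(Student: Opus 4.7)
The plan is to reduce ``$\Theta$ preserves valuations'' to the atomic biconditional $c \in \omega \Leftrightarrow \omega \in F_c$ for every context $c \in \mathcal{C}$ and every clopen $\omega \in V$, and then to extend this to arbitrary quantum formulas by an induction on formula complexity mirroring the inductive step already carried out in the surjectivity subproof. A valuation in the quantum frame is determined by the satisfaction relation $c \in \omega$, while the canonical valuation on $\mathcal{F}(\mathcal{L})^{++}$ is exactly ultrafilter membership; preservation under $\Theta$ is therefore the claim that these two relations agree under the identification $c \mapsto F_c$.

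For the forward direction of the base case, I would exploit the zero-dimensionality of the Stone space $(\mathcal{C}(\mathcal{L}), \mathcal{J})$. Since $\{U_{c'}\}_{c' \in \mathcal{C}}$ forms a base of clopen sets, any clopen $\omega \ni c$ contains some basic open $U_{c'}$ with $c \in U_{c'}$. By Definition \ref{bos}, $c \in U_{c'}$ forces $B = B'$ and $Th(c) \subseteq Th(c')$, from which transitivity of subset inclusion on theories gives $U_c \subseteq U_{c'} \subseteq \omega$. This places $\omega$ in the first component of $F_c$, so $\omega \in \Theta(c)$.

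For the reverse direction of the base case, I would split according to which generator of $F_c$ contributes $\omega$. If $U_c \subseteq \omega$, then $c \in U_c$ (trivially, since $Th(c) \subseteq Th(c)$ by Lemma \ref{une}) gives $c \in \omega$ immediately. If instead $\omega = \square^{B_i}_p \omega_0$ arises from $M_{B_i,p}$, then by the defining property of $M_{B_i,p}$ every $c' \in \mathcal{A}_1^{B_i}(c'')$ lies in $\omega_0$, which --- after matching with the three-clause definition of $R^{B_i}_p$ --- is precisely the semantic clause $c \in \square^{B_i}_p \omega_0$. The inductive step for compound formulas is then routine: Boolean cases $\neg, \land, \lor, \to$ use closure under intersections and the ultrafilter property, while the modal step $\square^{B_i}_p$ reduces directly to the previously established preservation of accessibility relations.

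The main obstacle I anticipate is the modal sub-case of the reverse base step. The definition of $M_{B_i,p}$ implicitly quantifies over auxiliary contexts $c'' \in U_c$ at which $p$ is maximal in $Th(c'')$, and ties the modal content to the basis-dependent accessibility chain $\mathcal{A}_1^{B_i}$; aligning this with the three clauses that define $(c,c') \in R^{B_i}_p$ is mostly bookkeeping but easy to mishandle. The induction for compound formulas, by contrast, is a direct transcription of the closure arguments used to verify the ultrafilter axioms for $F_c$ together with Proposition \ref{landmod}, so the real content of the argument lives in the atomic case.
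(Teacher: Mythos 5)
Your plan is correct in outline but takes a genuinely different route from the paper. The paper reads ``preserves valuations'' topologically: it shows that for every clopen $S\subseteq\mathcal{C}(\mathcal{L})$ the image $\Theta(S)$ is again clopen in $F$, proving openness by exhibiting, for each $f=\Theta(c)\in\Theta(S)$, a basic open $V_f=\{f'\in F\mid U_c\in f'\}$ with $f\in V_f\subseteq\Theta(S)$, and then deducing closedness by passing to complements and invoking bijectivity; there is no induction on formulas anywhere in that argument. You instead prove the pointwise truth lemma $c\in\omega\Leftrightarrow\omega\in\Theta(c)$ for atomic clopen sets and lift it through the formula-forming operations, using the ultrafilter axioms for the Boolean cases and the already-established preservation of accessibility relations for the modal case. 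Your statement is the stronger and, in the general-frames literature, the more standard one: together with bijectivity it identifies $\Theta(\omega)$ with the canonical clopen set $\{f\in F\mid\omega\in f\}$ of the double dual, from which the paper's conclusion (images of clopen sets are clopen) follows immediately, whereas the paper's argument shows only that $\Theta(S)$ is \emph{some} clopen set without pinning down that it is the one the double-dual valuation assigns to $S$. The price you pay is that the modal sub-case of your reverse base step forces you to reconcile the auxiliary context $c''$ in the definition of $M_{B_i,p}$ with the three-clause definition of $R^{B_i}_p$ --- the bookkeeping you rightly flag as the delicate point --- which the paper's topological argument never has to confront; but both arguments ultimately rest on the same ingredients (zero-dimensionality of the context space and the structure of the filter $F_c$), so neither is materially harder, and yours extracts more information.
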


\begin{subproof}
    To prove that \(\Theta\) preserves valuations, we must show that for any clopen subset \(S\subseteq\mathcal{C}(\mathcal{L})\), the image \(\Theta(S)\) is also a clopen subset in \(F\), the double-dual frame. This will be accomplished by demonstrating that \(\Theta\) preserves both openness and closedness.

    \begin{itemize}
        \item \textbf{Preserves Openness}: Let \(S\subseteq\mathcal{C}(\mathcal{L})\) be a clopen subset. We need to show that \(\Theta(S)\) is open in \(F\). This involves demonstrating that for each ultrafilter \(f\in\Theta(S)\), there exists a basic open set \(V_f\subseteq F\) such that \(f\in V_f\subseteq\Theta(S)\).
        \begin{enumerate}
            \item \textbf{Existence of Basic Open Sets in \(\mathcal{C}(\mathcal{L})\)}: Since \(S\) is open in \(\mathcal{C}(\mathcal{L})\), for each context \(c\in S\), there exists a basic open set \(U_c\subseteq\mathcal{C}(\mathcal{L})\) such that \(c\in U_c\subseteq S\). The set \(U_c\) is open by definition and includes all contexts sharing a specific logical property or set of propositions with \(c\).
            \item \textbf{Mapping Under \(\Theta\)}: By the definition of \(\Theta\), the ultrafilter \(f=\Theta(c)\) corresponds to the filter generated by \(U_c\). This implies that \(f\) contains all the clopen sets in \(V\) (the set of clopen subsets of \(F\)) that correspond to the propositions satisfied by \(c\) in \(U_c\).
            \item \textbf{Definition of \(V_f\)}: Define \(V_f\) as the basic open set in \(F\) corresponding to the clopen set \(U_c\). Specifically, \(V_f\) consists of all ultrafilters in \(F\) that contain \(U_c\). Formally,
        \[
        V_f=\{f'\in F\mid U_c\in f'\}.
        \]
        Since \(U_c\subseteq S\), it follows that every ultrafilter \(f'\) in \(V_f\) will contain the set \(U_c\), and hence \(f' \in \Theta(S)\). Therefore, \(V_f\subseteq\Theta(S)\).
        \end{enumerate}
        Now, we verify openness. Since \(f=\Theta(c)\) contains \(U_c\) by construction, \(f\in V_f\). As \(V_f\) is a basic open set in \(F\) and \(V_f\subseteq\Theta(S)\), this demonstrates that \(\Theta(S)\) is open in \(F\). Thus, \(\Theta\) preserves the openness of clopen sets.

        \item \textbf{Preserves Closedness}

        To show that \(\Theta\) preserves closedness, we need to demonstrate that if \(S\subseteq\mathcal{C}(\mathcal{L})\) is closed, then \(\Theta(S)\) is also closed in \(F\).
        \begin{enumerate}
            \item \textbf{Complement of \(S\)}: Since \(S\) is closed in \(\mathcal{C}(\mathcal{L})\), its complement \(S^c=\mathcal{C}(\mathcal{L})\setminus S\) is open in \(\mathcal{C}(\mathcal{L})\). As shown in the previous step, \(\Theta(S^c)\) is open in \(F\).
            \item \textbf{Complement of \(\Theta(S)\)}: The complement of \(\Theta(S)\) in \(F\) is given by \(\Theta(S)^c=F\setminus \Theta(S)\). Since \(\Theta\) is a bijection, \(\Theta(S^c)=\Theta(\mathcal{C}(\mathcal{L})\setminus S)\), which represents the set of all ultrafilters corresponding to contexts in \(S^c\).
            \item \textbf{Openness of \(\Theta(S^c)\)}: As shown earlier, \(\Theta(S^c)\) is open in \(F\), which implies that the complement of \(\Theta(S)\) is open. Therefore, \(\Theta(S)\) is closed in \(F\).
        \end{enumerate}
        \end{itemize}
        Since \(\Theta(S)\) has been shown to be both open and closed in \(F\), it follows that \(\Theta(S)\) is a clopen subset of \(F\). Therefore, \(\Theta\) preserves clopen subsets, ensuring that it preserves valuations. This completes the proof that \(\Theta\) preserves valuations.
\end{subproof}

    With the isomorphism between the quantum frame \(\mathcal{F}(\mathcal{L})\) and its double dual \(\mathcal{F}(\mathcal{L})^{++}\) established, we can conclude that the internal logic of the QCT, interpreted in the context of the quantum frame, corresponds to propositional polymodal logic. The modal operators in the logic reflect the accessibility relations \(R^j_i\) in the quantum frame, and the preservation of these relations by the isomorphism \(\Theta\) ensures that the logical structure is maintained.
    \end{proof}
\textbf{}
\\\\We have now demonstrated that the internal logic of the Quantum Contextual Topos is fully aligned with propositional polymodal logic. Through the construction of the double-dual frame and the verification of the isomorphism \(\Theta\), we have shown that the logical structures and accessibility relations within any quantum frame \(F(L)\) are preserved under dualization. This preservation affirms that the QCT framework faithfully encapsulates the logical intricacies of quantum propositions within a topological and modal structure. The results solidify the QCT as a robust and versatile mathematical tool, capable of bridging the gap between classical and quantum logic, while providing a coherent environment for the exploration of contextual quantum reasoning. The rigorous alignment with polymodal logic not only validates the internal consistency of the QCT but also enhances its potential applicability in quantum foundations and related fields.
\section{Conclusion}

In this paper, we have introduced the Quantum Contextual Topos (QCT), a novel framework designed to address the challenges posed by traditional quantum logic. By embedding quantum propositions within a topos-theoretic structure, QCT provides a rigorous mathematical foundation capable of preserving classical logic rules when applied to specific quantum contexts. This framework bridges the gap between classical logic and quantum mechanics, allowing for a more consistent interpretation of quantum phenomena.

Our work focused on the construction of the QCT, emphasizing its internal logic and the role of contextuality within quantum systems. We demonstrated that the QCT framework not only aligns with the principles of quantum mechanics but also retains the logical consistency required for classical reasoning. The key contributions of this paper include the definition of the Quantum Frame, the development of contextual topologies, and the formalization of quantum formulae within this structure.

Future work will involve exploring the practical implications of the QCT framework, particularly in the context of quantum computing.

\bibliography{bibl}
\end{document}